\newcommand\restr[2]{{
  \left.\kern-\nulldelimiterspace 
  #1 
  \vphantom{\big|} 
  \right|_{#2} 
  }} 
\newcommand{\comment}[1]{}
\def\N{\mathbb{N}} 
\def\Z{\mathbb{Z}} 
\def\R{\mathbb{R}} 
\def\C{\mathbb{C}}
\def\topdf{\texorpdfstring}
\theoremstyle{plain}
\newtheorem{teo}[equation]{Theorem} 
\newtheorem{thm}[equation]{Theorem}
\newtheorem{lema}[equation]{Lemma}
\newtheorem{lem}[equation]{Lemma}
\newtheorem{coro}[equation]{Corollary} 
\newtheorem{prop}[equation]{Proposition}
\theoremstyle{definition}
\newtheorem{defi}[equation]{Definition}
\theoremstyle{remark}
  \numberwithin{equation}{section}
\newcommand{\cB}{\mathcal B}
\newcommand{\cO}{\mathcal O}
\newcommand{\cP}{\mathcal P}
\newcommand{\cR}{\mathcal R}
\newcommand{\cS}{\mathcal S}
\newcommand{\cV}{\mathcal V}
\def\fA{\mathfrak{A}}
\def\fB{\mathfrak{B}}
\newcommand{\lra}{\longrightarrow}
\newcommand{\iso}{\overset{\sim}{\lra}}
\newcommand{\onto}{\twoheadrightarrow}
\newcommand{\ol}{\overline}
\def\reg{\operatorname{reg}}
\def\sink{\operatorname{sink}}
\def\inf{\operatorname{inf}}
\def\triqui{\vartriangleleft}
\renewcommand{\path}{\cP}
\DeclareMathOperator*{\colim}{colim}
\title{Simplicity of $L^p$-graph algebras}
\author{Guillermo Corti\~nas}
\address{Guillermo Corti\~nas\\ Dep. Matem\'atica-IMAS, FCEyN-UBA\\ Ciudad Universitaria Pab 1\\
1428 Buenos Aires\\ Argentina}
\email{gcorti@dm.uba.ar}
\urladdr{http://mate.dm.uba.ar/\~{}gcorti}
\author{Diego Montero}
\author{Mar\'\i a Eugenia Rodr\'\i guez}
\address{Mar\'\i a Eugenia Rodr\'\i guez, Departamento de Ciencias Exactas\ \ \\ Ciclo B\'asico Com\'un,
Universidad de Buenos Aires, Ciudad Universitaria, (1428) Buenos Aires, Argentina}
\email{merodrig@dm.uba.ar}
\thanks{Corti\~nas is a CONICET researcher; he was partially supported by grants PIP 11220200100423 and PGC2018-096446-B-C21. Both Corti\~nas and Rodr\'\i guez were supported by grants UBACyT 20020170100256BA and PICT-2021-I-A-0071. Montero's research was carried out while supported by a CONICET PhD fellowship.}
\begin{document}
\begin{abstract}
For each $1\le p<\infty$ and each countable directed graph $E$ we consider the Leavitt path $\C$-algebra $L(E)$ and the $L^p$-operator graph algebra $\cO^p(E)$. We show that the (purely infinite) simplicity of $\cO^p(E)$ as a Banach algebra is equivalent to the (purely infinite) simplicity of $L(E)$ as a ring.
\end{abstract}

\maketitle

\section{Introduction}\label{sec:intro}

Let $E$ be a countable directed graph, $L(E)=L_\C(E)$ its complex Leavitt path algebra, and $p\in [1,\infty)$. The $L^p$-operator algebra of $E$, $\cO^p(E)$, introduced in \cite{we}, is universal for spatial representations of $L(E)$ in $L^p$-spaces; when $p=2$ it agrees with the graph $C^*$-algebra $C^*(E)$. A Banach algebra $\fA$ is \emph{simple} if it has exactly two two-sided closed ideals and \emph{simple purely infinite} (SPI) if $0\ne \fA\ne \C$ and for all $a,b\in \fA$ with $a\ne 0$ there are sequences $(x_n),(y_n)$ of elements of $\fA$ such that $x_nay_n\to b$. A ring $A$ is simple if it has exactly two two-sided ideals and is SPI if it is not zero or a division ring and for every $a,b\in A$ with $a\ne 0$ there exist $x,y\in A$ such that $xay=b$. The main result of the current paper is the following.

\begin{thm}\label{thm:intro}
Let $E$ be a countable graph and $p\in [1,\infty)\setminus\{2\}$.
\item[i)] $\cO^p(E)$ is a simple Banach algebra $\iff$ $L(E)$ is a simple ring.
\item[ii)] $\cO^p(E)$ is simple purely infinite as a Banach algebra if and only if $L(E)$ is simple purely infinite as a ring. 
\item[iii)] $\cO^p(E)$ is a simple but not simple purely infinite Banach algebra $\iff$ $L(E)$ is a simple but not purely infinite ring. 
\end{thm}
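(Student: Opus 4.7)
The plan is to show that the (purely infinite) simplicity of $\cO^p(E)$ is characterized by the same graph-theoretic conditions on $E$ that characterize that of $L(E)$. For the Leavitt path algebra, the classical results of Abrams--Aranda Pino and Rangaswamy--Tomforde state that $L(E)$ is simple iff $E$ satisfies condition (L), is cofinal, and every vertex of $E$ connects to every sink and every infinite emitter; moreover, granted simplicity, $L(E)$ is SPI iff every vertex of $E$ is on or connects to a cycle. We will establish the same graph characterizations for $\cO^p(E)$; items (i) and (ii) then follow immediately, and (iii) is the formal consequence ``simple $\wedge\neg$SPI'' on both sides.

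\textbf{Proof of (i).} For the forward direction, assume $\cO^p(E)$ is simple. A hereditary saturated subset $H\subseteq E^0$ together with a choice $S$ of breaking vertices produces a closed two-sided ideal $J_{H,S}\subseteq \cO^p(E)$; this machinery should already be available in the preceding literature on $L^p$-graph algebras. Failure of cofinality, of condition (L), or of the connection property to sinks and infinite emitters each yields a nontrivial $(H,S)$, hence a nontrivial $J_{H,S}$, contradicting simplicity. For the reverse direction, let $J\ne 0$ be a closed two-sided ideal of $\cO^p(E)$. Since $L(E)$ is dense in $\cO^p(E)$, it suffices to show $J\cap L(E)\ne 0$: then $J\cap L(E)$ is a nonzero two-sided ideal of the (by hypothesis) simple ring $L(E)$, so equals $L(E)$, and density forces $J=\cO^p(E)$. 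The assertion $J\cap L(E)\ne 0$ is a Cuntz--Krieger uniqueness statement: under condition (L), any nonzero $a\in\cO^p(E)$ admits a compression $xay\in L(E)\setminus\{0\}$ with $x,y\in L(E)$.

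\textbf{Proof of (ii) and (iii).} Granted (i), it remains to show that, under the simplicity graph conditions, $\cO^p(E)$ is SPI iff every vertex of $E$ is on or connects to a cycle. For ``$\Leftarrow$'': given $a\ne 0$ and $b$ in $\cO^p(E)$, approximate by $a_n\to a$ and $b_n\to b$ in $L(E)$ with $a_n\ne 0$ for $n$ large; apply the SPI of $L(E)$ to factor $b_n=u_n a_n v_n$, choosing the factorization through vertex idempotents so that $\|u_n\|$ and $\|v_n\|$ are controlled, and conclude $u_n a v_n\to b$. For ``$\Rightarrow$'': if some vertex of $E$ does not connect to a cycle, the simplicity graph conditions force the corner of $\cO^p(E)$ at that vertex to be a finite-dimensional matrix algebra, which obstructs SPI. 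Finally, (iii) is immediate from (i) and (ii).

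\textbf{Main obstacle.} The central technical difficulty is the Cuntz--Krieger uniqueness theorem for $\cO^p(E)$ used in the reverse direction of (i). For $p=2$ this is standard, via the canonical conditional expectation of $C^*(E)$ onto its diagonal subalgebra. For $p\ne 2$ no such expectation is available, so one must rely on $L^p$-specific arguments from the preceding work on $\cO^p(E)$ to produce, for an arbitrary nonzero element of a closed two-sided ideal, a nonzero compression into the diagonal subalgebra (which sits inside $L(E)$). A secondary delicate point is the norm control of the factorizations $b_n=u_n a_n v_n$ in the SPI argument, which prevents the use of density alone and requires an explicit choice of factorization through vertex idempotents.
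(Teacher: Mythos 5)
Your overall architecture (reduce everything to graph-theoretic conditions on $E$) matches the paper's, but the two load-bearing steps you defer are exactly where the real work lies, and both contain genuine gaps. For the reverse direction of (i) you invoke a Cuntz--Krieger uniqueness theorem for $\cO^p(E)$; no such theorem is proved in the paper or available in the cited literature for $p\ne 2$, and the paper deliberately avoids needing one: it splits into the case where the simple graph $E$ has a cycle (then $E$ is SPI and one proves the much stronger Theorem \ref{teo:OE-sip}) and the acyclic case, where $\cO^p(E)$ is shown to be a spatial $L^p$-AF algebra and simplicity follows from the Phillips--Viola ideal-structure results (Propositions \ref{prop:acyccount} and \ref{prop:simpacyc}). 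Moreover your stated reason that the $p=2$ argument fails --- ``no conditional expectation is available'' --- is inaccurate: the gauge action integrates over $\mathbb{S}^1$ for every $p$, giving the expectation $\Phi_0$ of \eqref{map:phin} onto $\cO^p(E)_0$, and that expectation is the engine of the paper's hardest proof. (The forward direction of (i) is cheap and needs no $J_{H,S}$ machinery: the kernel of a contractive homomorphism is a closed ideal, so Banach-algebra simplicity gives simplicity as an $L^p$-operator algebra, which \cite{we}*{Theorem 1.1} already identifies with simplicity of $L(E)$.)

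For (ii) both directions have problems. In ``$\Leftarrow$'' you factor $b_n=u_na_nv_n$ through an approximation $a_n$ of $a$, so you must control $\|u_n\|\,\|v_n\|$ to kill the error $u_n(a-a_n)v_n$; algebraic SPI of $L(E)$ gives no such control, and ``choosing the factorization through vertex idempotents'' does not supply it. This is precisely the analytic content of Theorem \ref{teo:OE-sip}: the paper first produces, for the \emph{actual} element $a$, elements $x,y$ with $xay=v$ a vertex, using $\Phi_0$, the averaging maps $\psi_r$ of Lemma \ref{lem:endo}, Lemmas \ref{lem:caminos} and \ref{lem:trunca}, and explicit estimates to invert a small perturbation of $\alpha\alpha^*$ in a corner; only then is $b$ approximated, and since $xay=v$ holds exactly with $v\in L(E)$, the algebraic SPI of $L(E)$ yields exact identities $u_nvz_n=b_n$ with no norm control required. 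In ``$\Rightarrow$'' your claim that a vertex $w$ not connecting to a cycle has finite-dimensional corner $w\cO^p(E)w$ is false in general: the hereditary subgraph $H$ downstream of $w$ is acyclic but may be infinite, so the corner can be an infinite-dimensional spatial AF algebra. The paper (Theorem \ref{teo:E-sip}) instead shows $w\cO^p(E)w=w\cO^p(H)w$ and obstructs SPI $K$-theoretically: for a spatial AF algebra the group-completion map $\cV\to K_0$ is injective, while for a unital SPI ring it is not, by Ara--Goodearl--Pardo. Finally (iii) is not purely formal from (i) and (ii): one needs that a simple graph is non-SPI iff it is acyclic, together with Proposition \ref{prop:simpacyc} for the converse.
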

It is well-known \cite{lpabook}*{Chapter 3} that the situation of part iii) of the theorem above $E$ is acyclic. We also show (see Proposition \ref{prop:acyccount}) that $\cO^p(E)$ is almost finite (in the sense of \cite{chrismg}) for any countable acyclic graph $E$. Hence we deduce 
\begin{coro}\label{coro:introdicho}
If $\cO^p(E)$ is simple then it is either purely infinite or almost finite. 
\end{coro}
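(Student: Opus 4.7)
The plan is to obtain Corollary \ref{coro:introdicho} as an immediate consequence of three ingredients already in place: Theorem \ref{thm:intro}, the well-known structural result from \cite{lpabook}*{Chapter 3} on simple Leavitt path algebras that fail to be purely infinite, and Proposition \ref{prop:acyccount}. I do not expect any genuinely new argument beyond stringing these facts together.

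Concretely, I would argue by dichotomy on whether $\cO^p(E)$ is purely infinite. If it is, there is nothing to prove. Otherwise, $\cO^p(E)$ is simple but not simple purely infinite, so by Theorem \ref{thm:intro}(iii) its companion Leavitt path algebra $L(E)$ is simple as a ring but not purely infinite. The fact recalled from \cite{lpabook}*{Chapter 3}, invoked in the paragraph after the theorem, then translates this into a condition on the graph itself: $E$ must be acyclic.

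At this point Proposition \ref{prop:acyccount} applies directly and yields that $\cO^p(E)$ is almost finite in the sense of \cite{chrismg}. Combining the two exhaustive cases gives the stated dichotomy.

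If any step carries a subtlety, it is matching the notion of \emph{purely infinite} used in \cite{lpabook}*{Chapter 3} with the ring-theoretic SPI condition of the present paper, but this is a bookkeeping check rather than a real obstruction; the interesting content has already been absorbed into Theorem \ref{thm:intro} and Proposition \ref{prop:acyccount}, so the proof itself should be no more than a few lines.
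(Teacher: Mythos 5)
Your proposal is correct and is essentially the paper's own argument: the paper deduces the corollary by exactly this dichotomy, using Theorem \ref{thm:intro}(iii) together with the fact from \cite{lpabook}*{Chapter 3} that a simple non--purely-infinite $L(E)$ forces $E$ to be acyclic, and then applying Proposition \ref{prop:acyccount}. No further comment is needed.
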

Both Theorem \ref{thm:intro} and Corollary \ref{coro:introdicho} were known for $p=2$ \cite{dt}*{Corollaries 2.13, 2.14 and 2.15}. Let $\cR_n$ be the graph consisting of a single vertex and $n$ loops. N. C. Phillips proved in \cite{chrisimple}*{Theorem 5.14} for $n\ge 2$ the $L^p$-Cuntz algebra $\cO^p_n=\cO^p(\cR_n)$, is simple purely infinite. Phillips' result, which is now a particular case of Theorem  \ref{thm:intro}, was the starting point for this article. Our proof that $E$ SPI implies $\cO^p(E)$ SPI is inspired by his arguments.  
An $L^p$-operator algebra $B$ is \emph{simple} if every nonzero contractive homomorphism to another $L^p$-operator algebra is injective. It was shown in \cite{we}*{Theorem 1.1} that $\cO^p(E)$ is a simple $L^p$-operator algebra if and only if $L(E)$ is a simple ring. Hence we deduce the following.

\begin{coro}\label{coro:introsimp}
$\cO^p(E)$ is simple as a Banach algebra $\iff$ it is simple as an $L^p$-operator algebra. 
\end{coro}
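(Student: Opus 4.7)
The plan is that Corollary \ref{coro:introsimp} is a direct consequence of Theorem \ref{thm:intro}(i) combined with the cited result \cite{we}*{Theorem 1.1}. Both of these assertions have the form ``$\cO^p(E)$ is simple (in some sense) if and only if $L(E)$ is a simple ring''; they differ only in which notion of simplicity of $\cO^p(E)$ is being characterized. So the two notions of simplicity are simultaneously characterized by the same purely algebraic/combinatorial condition on the graph, and transitivity of ``iff'' yields the corollary.

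Concretely, the proof will consist of essentially one step. By Theorem \ref{thm:intro}(i), the condition ``$\cO^p(E)$ is simple as a Banach algebra'' is equivalent to ``$L(E)$ is a simple ring''. By \cite{we}*{Theorem 1.1}, the condition ``$\cO^p(E)$ is simple as an $L^p$-operator algebra'' (meaning every nonzero contractive homomorphism from $\cO^p(E)$ to another $L^p$-operator algebra is injective) is also equivalent to ``$L(E)$ is a simple ring''. Chaining the two equivalences through this common middle term gives the claim. No further work on the Banach-algebra or operator-algebraic side is needed.

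There is no serious obstacle here: all the content is already packaged in Theorem \ref{thm:intro}(i), whose proof is the substance of the paper. The only thing worth emphasizing, for clarity of the exposition, is that a priori Banach-algebra simplicity (absence of nontrivial closed two-sided ideals) and $L^p$-operator-algebra simplicity (injectivity of all nonzero contractive $L^p$-operator-algebra homomorphisms) are distinct notions, so the equivalence is not tautological; it becomes a theorem precisely because each is independently characterized via $L(E)$. I will make this observation explicit in one sentence so that the reader sees what is actually being asserted, before stating the two-line deduction.
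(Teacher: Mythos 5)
Your proposal is correct and is exactly the paper's own argument: the corollary is deduced by chaining Theorem \ref{thm:intro}(i) with \cite{we}*{Theorem 1.1} through the common condition ``$L(E)$ is a simple ring.'' Your added remark that the two notions of simplicity are a priori distinct, so the equivalence is genuinely a theorem rather than a tautology, is a sensible clarification but does not change the substance.
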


The rest of this paper is organized as follows. Some basic definitions and results about graphs, Leavitt path algebras and their $L^p$-completions are recalled in Section \ref{sec:prelis}. The notion of simple pure infiniteness for Banach algebras is introduced in Section \ref{sec:spiban}; for unital Banach algebras it is equivalent to algebraic SPI (Lemma \ref{lem:sat=p}). We also show (Corollary \ref{coro:corner}) that if a Banach algebra $\fA$ is SPI and $0\ne p=p^2\in\fA$, then $p\fA p$ is again SPI. The next three sections are mainly devoted to proving parts of Theorem \ref{thm:intro}. Section \ref{sec:acyc} deals with acyclic graphs. Proposition \ref{prop:acyccount} shows that for acyclic countable $E$ and $p\in [1,\infty)\setminus\{2\}$, $\cO^p(E)$ is a spatial AF-algebra in the sense of Phillips-Viola \cite{chrismg}. We use this and their results on the structure of ideals in such algebras \cite{chrismg2} to prove Proposition \ref{prop:simpacyc}, which says that if in addition $E$ is simple, then so is $\cO^p(E)$. In Section \ref{sec:spiOspiE} we establish Theorem \ref{teo:E-sip}, which says that $E$ is SPI whenever $\cO^p(E)$ is. The converse statement is Theorem \ref{teo:OE-sip}, proved in  Section \ref{sec:EspiOspi}. Its proof adapts and generalizes Phillips' arguments for the proof of \cite{chrisimple}*{Theorem 5.14}. Finally in Section \ref{sec:final} we put together the results of the previous section to prove Theorem \ref{thm:intro}.

\section{Preliminaries}\label{sec:prelis}
In this section we briefly recall (from \cite{lpabook} and \cite{we}) some of the basics of directed graphs, Leavitt path algebras and their $L^p$-completions.

An \emph{oriented graph}, herefrom simply a \emph{graph}, is a quadruple $E=(E^0,E^1,r,s)$ consisting of sets $E^0$ and $E^1$ of \emph{vertices} and \emph{edges}, and \emph{range} and \emph{source} functions $r,s\colon E^1\rightarrow E^0$. 
 
A vertex $v\in E^0$ is an \emph{infinite emitter} if $s^{-1}(v)$ is infinite, and is a \emph{sink} if $s^{-1}(v)=\emptyset$; otherwise we call $v$ a \emph{regular} vertex. We write $\sink(E),\inf(E),\reg(E)\subset E^0$ for the sets of sinks, infinite emitters, and regular vertices. We say that  $E$ is \emph{row-finite} if $\inf(E)=\emptyset$ and \emph{regular} $E^0=\reg(E)$. 

 A vertex $v$ is a \emph{source} if $r^{-1}(v)=\emptyset$.

A \emph{path} $\alpha$ is a (finite or infinite) sequence of edges $\alpha=e_1\ldots e_i\ldots$ such that $r(e_i)=s(e_{i+1})$  $(i\ge 1)$. For such $\alpha$, we write $s(\alpha)=s(e_1)$; if $\alpha$ is finite of \emph{length} $l$, we put $|\alpha|=l$ and $r(\alpha)=r(e_l)$. If $\alpha$ and $\beta$ are paths with $|\alpha|<\infty$, we write
\begin{equation}\label{orderpath}
    		\alpha\ge\beta \iff \exists\ \gamma \text{ such that } r(\alpha)=s(\gamma) \text{ and } \beta=\alpha\gamma.
\end{equation}
Here $\alpha\gamma$ is the path obtained by concatenation.

Vertices are considered as paths of length $0$. 
We write $\mathcal{P}=\cP(E)$ for the set of finite paths, $\mathcal{P}_n$ for the set of paths of length $n$, and, if $v\in E^0$, $\cP_{n,v}$ for the paths $\alpha$ of length $n$ with $r(\alpha)=v$ . 

Adding a basepoint $0$ to the set $\cP$ one obtains a pointed semigroup $\cP_+$
where the product of two paths is their concatenation whenever it is defined, and is $0$ otherwise. By adding a formal inverse $\alpha^*$ for any $\alpha\in\cP$, 
one obtains a pointed inverse semigroup $\cS(E)$ whose elements are $0$ and pairs of the form $\alpha\beta^*$ with $r(\alpha)=r(\beta)$. 
To any graph $E$ and field $\ell$, one associates an $\ell$-algebra $L_\ell(E)$, the \emph{Leavitt path algebra} of $E$ (\cite{lpabook}*{Definition 1.2.3}), which is universal for tight representations of $\cS(E)$ in $\ell$-vector spaces \cite{we}*{Lemma 3.1}. In this paper we only consider the case $\ell=\C$; we write $L(E)=L_\C(E)$. Path length induces a $\Z$-grading $L(E)=\displaystyle{\bigoplus_{n\in\Z}}L(E)_n$ such that $|\alpha\beta^*|=|\alpha|-|\beta|$. The homogeneous component of degree zero carries a filtration 
\begin{equation}\label{eq:filtle0}
L(E)_0=\bigcup_{n=0}^\infty L(E)_{0,n},
\end{equation}
where $L(E)_{0,n}$ is spanned by the $\alpha\beta^*$ with $\alpha,\beta\in\cP_m$, $m\le n$. If $E$ is row-finite, the algebra $L(E)_{0,n}$ is isomorphic to a direct sum of matrix algebras, indexed by paths in $E$, as follows
\begin{equation}\label{eq:le0r}
\left(\bigoplus_{r=0}^{n-1}\bigoplus_{v\in\sink(E)}M_{\cP_{r,v}}\right)\bigoplus\bigoplus_{v\in\reg(E)}M_{\cP_{n,v}}\iso L(E)_{0,n}.
\end{equation}
The isomorphism above sends the matrix unit $\epsilon_{\alpha,\beta}$ to $\alpha\beta^*$. 

We say that a graph $E$ is \emph{countable} if both $E^0$ and $E^1$ are countable. For countable $E$ and $p\in[1,\infty)$ the \emph{$L^p$-operator algebra} $\cO^p(E)$ is defined in \cite{we}*{Definition 7.4}. It comes equipped with an algebra homomorphism $\rho:L(E)\to \cO^p(E)$ that is injective \cite{we}*{Proposition 4.11} and universal initial among all $L^p$-representations that are spatial in the sense of Phillips \cite{lpcuntz}. The case $p=2$ recovers the usual Cuntz-Krieger graph $C^*$-algebra; we have $\cO^2(E)=C^*(E)$ \cite{we}*{Proposition 7.9}.
The algebra $\cO^p(E)$ carries a continuous gauge action of the circle group $\mathbb{S}^1$ \cite{we}*{Lemma 7.8} by isometric isomorphisms $\gamma_z$ ($z\in \mathbb{S}^1$). We write
\begin{equation}\label{eq:open}
 \cO^p(E)_{n}=\{x\in\cO^p(E): \gamma_z(x)=z^nx\} 
\end{equation}
for the homogeneous component of degree $n$ of the associated $\Z$-grading. It is the closure of the image of $L(E)_n$ in $\cO^p(E)$ \cite{we}*{Proposici\'on 3.1.7 (d)}. In particular, writing $\cO^p(E)_{0,n}$ for the image of $L(E)_{0,n}$, we have
\[
\cO^p(E)_{0}=\overline{\bigcup_{n\ge 0} \cO^p(E)_{0,n}}.
\]
The map $L(E)\to L(E)_n$ that sends an element to its homogeneous component of degree $n$ extends to an idempotent operator $\Phi_n:\cO^p(E)\to \cO^p(E)$ with image $\cO^p(E)_n$ defined by
\begin{equation}\label{map:phin}
\Phi_n(a)=\frac{1}{2\pi}\int_0^{2\pi}e^{-in\theta}\gamma_{e^{i\theta}}(a)d\theta.     
\end{equation}

Some basic properties of $\Phi_n$, analogous to those well-known for $p=2$, were established in \cite{etesis}*{Secci\'on 3.1}. In particular we have the following.

\begin{lem}\label{lem:record}
Let $E$ be a countable graph, $p\in [1,\infty)$, $a\in\cO^p(E)$, $m,n\in\Z$ and $b\in\cO^p(E)_m$. 
\item[i)] \cite{etesis}*{Lema 3.1.6 (a)} $\Phi_n(ab)=\Phi_{n-m}(a)b$, $\Phi_n(ba)=b\Phi_{n-m}(a)$.
\item[ii)] \cite{etesis}*{Corolario 3.1.9} If $a\ne 0$ then there exists $r\in\Z$ such that $\Phi_r(a)\ne 0$.
\end{lem}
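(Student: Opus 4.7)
The plan for part (i) is to use that the gauge action $\gamma_z$ is an algebra homomorphism and that $\gamma_z(b)=z^mb$ for $b\in\cO^p(E)_m$. Then $\gamma_{e^{i\theta}}(ab)=e^{im\theta}\gamma_{e^{i\theta}}(a)\,b$, and since right multiplication by $b$ is a bounded linear operator, it commutes with the Bochner integral defining $\Phi_n$. Substituting into \eqref{map:phin} we obtain
\[
\Phi_n(ab)=\frac{1}{2\pi}\int_0^{2\pi} e^{-i(n-m)\theta}\gamma_{e^{i\theta}}(a)\,d\theta\cdot b=\Phi_{n-m}(a)b.
\]
The identity for $\Phi_n(ba)$ is entirely analogous, using that left multiplication by $b$ is bounded. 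There is essentially no obstacle here, only bookkeeping.

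For part (ii) the plan is to reduce the statement to classical Fourier uniqueness through duality. Fix $a\in\cO^p(E)$ and consider the orbit map
\[
f\colon \R\longrightarrow \cO^p(E),\qquad f(\theta)=\gamma_{e^{i\theta}}(a),
\]
which is continuous and $2\pi$-periodic since the gauge action is continuous and $\gamma_1=\id$. For every continuous linear functional $\varphi\in\cO^p(E)^*$, the composite $\varphi\circ f$ is a continuous scalar function on the circle whose $n$-th Fourier coefficient equals $\varphi(\Phi_n(a))$ by linearity and continuity of $\varphi$ under the integral.

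Assume $\Phi_r(a)=0$ for every $r\in\Z$. Then every $\varphi\circ f$ has all Fourier coefficients zero, and by the Fourier uniqueness theorem for scalar continuous functions on the circle we get $\varphi(f(\theta))=0$ for every $\theta$ and every $\varphi$. By Hahn--Banach, $f(\theta)=0$ for all $\theta$; evaluating at $\theta=0$ gives $a=\gamma_1(a)=f(0)=0$, contradicting $a\ne 0$. Hence some $\Phi_r(a)\ne 0$.

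The main obstacle, if any, is ensuring that the Bochner integral in \eqref{map:phin} commutes with bounded operators and with continuous functionals; this is standard once one knows $\theta\mapsto\gamma_{e^{i\theta}}(a)$ is continuous, which is exactly the content of the cited fact that the gauge action is continuous. Everything else is a direct transcription of classical Fourier-series arguments into the Banach-space setting.
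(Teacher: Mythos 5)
Your proposal is correct, and since the paper itself offers no proof of this lemma (it simply cites \cite{etesis}), there is nothing to diverge from: part (i) is exactly the bookkeeping you describe, using that multiplication by a fixed element is bounded and hence commutes with the integral in \eqref{map:phin}, and part (ii) is the standard reduction to scalar Fourier uniqueness via functionals and Hahn--Banach. The only remark worth adding is that (ii) admits a marginally more direct variant that avoids duality altogether: the Ces\`aro means of the vector-valued series $\sum_n\Phi_n(a)z^n$ converge in norm to $a$ at $z=1$ by the Banach-space Fej\'er theorem (using norm-continuity of $\theta\mapsto\gamma_{e^{i\theta}}(a)$), so if every $\Phi_r(a)$ vanishes then $a=0$ immediately.
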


A finite path $\alpha$ is \emph{closed} if $s(\alpha)=r(\alpha)$. A closed path $\alpha=e_1\dots e_n$ is a \emph{cycle} if in addition $s(e_i)\ne s(e_j)$ if $i\ne j$. An \emph{exit} of a cycle $\alpha$ as above is an edge $f$ such that there exists $1\le i\le n$ such that $s(f)=s(e_i)$ and $f\ne e_i$. A graph $E$ is \emph{cofinal} if for every $v\in E^0$ and each cycle $c$ there is a path starting at $v$ and ending at some vertex in $c$. 

\begin{defi}\label{defi:Esimple}
A graph $E$ is \emph{simple} if it is cofinal and every cycle in $E$ has an exit. A simple graph $E$ is \emph{simple purely infinite } if in addition $E$ has at least one cycle. 
\end{defi}
In the following theorem we record some known equivalences between simplicity conditions on $E$, $L(E)$ and $\cO^p(E)$.

\begin{thm}\label{thm:record}
Let $E$ be a graph and $p\in [1,\infty)$. 
\item[i)] \cite{lpabook}*{Theorem 2.9.7} $E$ is simple if and only if $L(E)$ is simple. If $E$ is countable this is further equivalent to: every nonzero spatial representation of $\cO^p(E)$ is injective \cite{we}*{Theorem 1.1}. 
\item[ii)]\cite{lpabook}*{Theorem 3.1.10} $E$ is SPI if and only if $L(E)$ is. 
\end{thm}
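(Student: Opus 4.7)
The plan is to treat this theorem as a \emph{recording} of results established elsewhere, and to organize the proof as citations of \cite{lpabook}*{Theorems 2.9.7 and 3.1.10} and \cite{we}*{Theorem 1.1}, with brief indications of the main ideas for completeness. Nothing new needs to be proved; the role of Theorem \ref{thm:record} is to fix the three characterizations that will be used in the sequel (for $L(E)$, for $E$, and for spatial representations of $\cO^p(E)$), so the proof I write will be very short and essentially bibliographic.

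For part (i), I would first recall the classical direction in \cite{lpabook}*{Theorem 2.9.7}: given a nonzero two-sided ideal $I\subseteq L(E)$, one argues that $I$ must contain some vertex $v\in E^0$ by compressing a nonzero element by appropriate path terms $\alpha$ and $\beta^*$ in $\cS(E)$ so that $\alpha x\beta^*$ becomes a vertex (this uses the absence of exit-free cycles, i.e.\ condition (L)); cofinality then gives $w\in I$ for every $w\in E^0$, hence $I=L(E)$. The converse uses that a cycle without exit produces a subalgebra isomorphic to $\C[x,x^{-1}]$ (and hence a nontrivial ideal), while failure of cofinality yields a nontrivial hereditary-saturated subset of $E^0$ and a corresponding graded ideal. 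For the $\cO^p(E)$ statement, I would invoke \cite{we}*{Theorem 1.1}: because $\cO^p(E)$ is universal among spatial $L^p$-representations of $L(E)$ and the canonical map $\rho\colon L(E)\to\cO^p(E)$ is injective by \cite{we}*{Proposition 4.11}, a spatial representation of $\cO^p(E)$ has trivial kernel exactly when its restriction to $L(E)$ is injective, which is the simplicity statement for $L(E)$.

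For part (ii), I would cite \cite{lpabook}*{Theorem 3.1.10}. The idea in the forward direction: assuming $E$ is SPI, part (i) already gives simplicity of $L(E)$; given $0\ne a$ and $b$ in $L(E)$, one first reduces to $a=v$ for some vertex, exactly as in the simplicity argument, then uses the existence of a cycle with exit at (or connected to) $v$ to build, for each $n$, mutually orthogonal projections $p_1,\dots,p_n$ with $p_i\sim v$ in $L(E)$; this produces the single $x,y$ such that $xay=b$. Conversely, if $E$ has no cycles, $L(E)_0=L(E)$ is a locally matricial algebra by \eqref{eq:le0r}, hence has nonzero finite-dimensional quotients, and cannot be SPI; and if there is a cycle without exit the Laurent polynomial subring obstructs both simplicity and pure infiniteness.

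The only potential obstacle if one were to reprove these from scratch would be reproducing the Cuntz--Krieger-type uniqueness and the machinery of graded and basic ideals in Leavitt path algebras; but as these are precisely the results imported verbatim from \cite{lpabook} and \cite{we}, the proof of Theorem \ref{thm:record} reduces to the references indicated in its statement.
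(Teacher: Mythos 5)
Your proposal is correct and matches the paper exactly: the paper offers no proof of Theorem \ref{thm:record} at all, presenting it purely as a record of \cite{lpabook}*{Theorems 2.9.7 and 3.1.10} and \cite{we}*{Theorem 1.1}, which is precisely the bibliographic treatment you give. Your supplementary sketches of the underlying arguments (reduction of a nonzero ideal element to a vertex via condition (L) plus cofinality, the $\C[x,x^{-1}]$ obstruction from an exit-free cycle, and the universality of $\cO^p(E)$ for spatial representations) are accurate and consistent with the cited sources.
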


\section{Simple purely infinite  Banach algebras}\label{sec:spiban}
\begin{defi}\label{defi:spi}
Let $\fA$ be  Banach algebra.  $\mathfrak{A}$ is  \emph{simple} if it has exactly two closed two-sided ideals. $\fA$ is \emph{simple purely infinite} if $ 0\ne \mathfrak{A} \neq \C$ and for every  $a,b \in \mathfrak{A}$ with $a\ne 0$ there are sequences $\{ x_n\}_{n\in \N}$ and $ \{ y_n\}_{n\in \N} $ in $\mathfrak{A}$ such that $$ \lim_{n \to \infty} x_n a y_n  = b .$$
\end{defi}
\begin{lema}\label{lem:sat=p}
    Let $\fA$ be an SPI Banach algebra and $a,p\in \fA\setminus\{0\}$ with $p$ idempotent.
Then there are $s\in p\fA$ and $t\in\fA p$ such that $sat=p$.
\end{lema}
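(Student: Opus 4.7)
My plan is to reduce the approximate condition from the SPI hypothesis to an exact equation inside the corner $p\fA p$, using the fact that this corner is a unital Banach algebra.

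First I would apply the SPI property to the pair $(a,p)$: since $a\ne 0$, there exist sequences $(x_n),(y_n)$ in $\fA$ with $x_na y_n\to p$. Multiplying on the left by $p$ and on the right by $p$, and using $p^2=p$ together with continuity of multiplication, I get
\[
s_n\,a\,t_n \longrightarrow p,\qquad\text{where } s_n=px_n\in p\fA,\ t_n=y_np\in\fA p.
\]
Each element $s_na t_n$ lies in the corner $p\fA p$, which is a closed subalgebra of $\fA$ with multiplicative identity $p$, hence a unital Banach algebra in its own right.

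Next I would invoke the Neumann series argument in this unital corner: since $s_na t_n\to p$ and $p$ is the unit of $p\fA p$, for all sufficiently large $n$ we have $\|s_na t_n-p\|<1$, so $s_na t_n$ is invertible in $p\fA p$. Fix one such $n$ and let $v\in p\fA p$ satisfy $v\,(s_na t_n)=p$. Then setting
\[
s := v s_n,\qquad t := t_n,
\]
we have $s\in p\fA$ (because $v=pv$ gives $ps = pv s_n = vs_n = s$) and $t\in\fA p$ (because $t_n p = y_np^2 = y_np = t_n$), and by construction
\[
s\,a\,t = v\,(s_n a t_n) = p,
\]
as required.

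The argument is almost entirely formal once one notices the right move; the only step that requires a moment of thought is recognising that $p\fA p$ is a unital Banach algebra, so that the approximate identity $s_na t_n \to p$ in $\fA$ genuinely becomes an approximation of the unit in a Banach algebra where the Neumann series is available. There is no real obstacle beyond this observation.
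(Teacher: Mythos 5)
Your argument is correct and is essentially the paper's own proof: multiply the SPI sequences by $p$ on both sides to land in the unital Banach algebra $p\fA p$, then use that elements near the unit $p$ are invertible there to turn the approximation into an exact equation. No issues.
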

\begin{proof}
Because $\fA$ is SPI, there exist $s_n,t_n\in \fA$ $(n\ge 1)$ such that $x_n=s_nat_n\to p$. Upon multiplying by $p$ on both sides, we may assume $s_n=ps_n$ and $t_n=t_np$ so that $x_n\in \fB:=p\fA p$. Now $\fB$ is a unital Banach algebra, and thus its group of units is open. Hence for $n>>0$ there is $b\in\fB$ with $bx_n=p$, and we may take $s=bs_n$ and $t=t_n$. 
\end{proof}
For $n\ge 1$, consider the \emph{Cohn algebra} \cite{cohn}
\[
C_n=\C\{x_1,\dots,x_n,y_1,\dots,y_n\}/\langle y_ix_j-\delta_{i,j}: 1\le i,j\le n\rangle.
\]
\begin{lem}\label{lem:spipi} Let $\fA$ and $p$ be as in Lemma \ref{lem:sat=p}. Then there
exists a unital $\C$-algebra homomorphism $C_2\to p\fA p$.
\end{lem}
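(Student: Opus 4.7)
The goal is to exhibit four elements $X_1,X_2,Y_1,Y_2\in p\fA p$ satisfying the Cohn relations $Y_iX_j=\delta_{ij}\,p$; the universal property of $C_n$ then produces the required unital $\C$-algebra homomorphism $C_2\to p\fA p$ sending $x_i\mapsto X_i$, $y_i\mapsto Y_i$. I would build them in two stages: first, a pair $(X_1,Y_1)$ in $p\fA p$ with $Y_1X_1=p$ but $X_1Y_1\ne p$ (a "proper isometry" inside the corner); and then, using the complementary idempotent $q:=p-X_1Y_1$, a second pair $(X_2,Y_2)$ orthogonal to the first.

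For the first stage, set $\fB:=p\fA p$, a unital Banach algebra with unit $p$, which by Corollary~\ref{coro:corner} is itself SPI; in particular $\fB\ne\C$, so by Gelfand--Mazur $\fB$ is not a division algebra. A standard ring-theoretic argument (if every nonzero element had a left inverse, applying the hypothesis also to those left inverses would force two-sided invertibility) then produces a nonzero $c\in\fB$ with no left inverse in $\fB$. Apply Lemma~\ref{lem:sat=p} to $c$ and $p$ to obtain $s\in p\fA$ and $t\in\fA p$ with $sct=p$. Replacing $s,t$ by $sp,pt$ and using $pc=c=cp$ to see the equation is preserved, we may assume $s,t\in\fB$. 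Put $Y_1:=sc$ and $X_1:=t$; both lie in $\fB$ and $Y_1X_1=sct=p$. If $X_1Y_1=tsc$ were equal to $p$, the element $ts\in\fB$ would be a left inverse of $c$, contradicting the choice of $c$; hence $X_1Y_1\ne p$.

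For the second stage, $q:=p-X_1Y_1\in\fB$ is nonzero, and short calculations using $Y_1X_1=p$ give $q^2=q$, $qX_1=0$, and $Y_1q=0$. Applying Lemma~\ref{lem:sat=p} once more to $q$ and $p$ and normalizing as before produces $s',t'\in\fB$ with $s'qt'=p$. Set $Y_2:=s'q$ and $X_2:=qt'$. Then $Y_2X_2=s'q^2t'=s'qt'=p$, while $Y_2X_1=s'(qX_1)=0$ and $Y_1X_2=(Y_1q)t'=0$, completing all four Cohn relations, after which the universal property of $C_2$ finishes the job.

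The main obstacle is producing a non-left-invertible nonzero $c\in\fB$, which is equivalent to knowing $\fB\ne\C$. This is where the full strength of the SPI hypothesis (rather than mere simplicity) enters, and Corollary~\ref{coro:corner} packages it cleanly; everything after that step is routine idempotent bookkeeping together with the universal property of $C_2$.
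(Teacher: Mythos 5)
Your two-stage construction of the four elements satisfying the Cohn relations is correct, and it is in essence the argument behind \cite{agp}*{Proposition 1.5}, which the paper simply cites: given a nonzero $c\in p\fA p$ with no left inverse, Lemma \ref{lem:sat=p} produces a ``proper isometry'' $(X_1,Y_1)$, and a second application to the complementary idempotent $q=p-X_1Y_1$ yields the orthogonal pair $(X_2,Y_2)$. All of that idempotent bookkeeping checks out, as does the normalization placing $s,t$ inside the corner.

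The problem is the step you yourself identify as the crux. You justify the claim that $\fB=p\fA p$ is not a division algebra by invoking Corollary \ref{coro:corner}, but in the paper that corollary is deduced \emph{from} Lemma \ref{lem:spipi}: the only nontrivial clause in its proof --- that $\fB$ is not a division ring --- is supplied precisely by the existence of the homomorphism $C_2\to p\fA p$ you are trying to construct. So the argument is circular, and the circularity sits exactly where the real content lies. Transitivity alone cannot close it: the corner $\fB$ inherits from Lemma \ref{lem:sat=p} the property that $xay=p$ is solvable for every $a\ne 0$, but $\C$ itself has that property, so nothing so far rules out $p\fA p=\C p$. When $p$ is a unit of $\fA$ your argument does go through, since then $\fB=\fA\ne\C$ by the definition of an SPI Banach algebra and Gelfand--Mazur applies; this is exactly the paper's first case. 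For a proper corner, however, one must use elements of $\fA$ outside $p\fA p$ to produce a non-invertible nonzero element (equivalently, an infinite idempotent) inside the corner, as in the argument of \cite{lpabook}*{Proposition 3.8.8, proof that (2)$\Rightarrow$(3)} that the paper cites; that is the missing idea.
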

\begin{proof}
 We must show that there exist elements $s_i,t_i\in p\fA p$, $i=1,2$, such that $t_is_j=\delta_{i,j}p$, or equivalently, that the right module $p\fA\oplus p\fA$ embeds into $p\fA$ as a direct summand. Assume first that $\fA$ is unital with unit $p$. Then by Lemma \ref{lem:sat=p}, $\fA$ is SPI as a ring, and so the lemma follows from 
 \cite{agp}*{Proposition 1.5}. The case when $p$ is not a unit follows from Lemma \ref{lem:sat=p} and the argument of \cite{lpabook}*{Proposition 3.8.8, proof that (2)$\Rightarrow$(3)}.
\end{proof}
\begin{coro}\label{coro:corner}
If $\fA$ is an SPI Banach algebra and $p\in\fA$ a nonzero idempotent, then the Banach subalgebra $p\fA p$ is again SPI. 
\end{coro}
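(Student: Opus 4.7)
The claim breaks into three conditions on $\fB := p\fA p$: it is nonzero, it is not $\C$, and it satisfies the SPI approximate factorization. Nonzeroness is free since $p = p\cdot p\cdot p \in \fB$ and $p\ne 0$. The real work will lean on the two preceding lemmas.

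To rule out $\fB = \C$, the plan is to invoke Lemma \ref{lem:spipi}: it yields a unital $\C$-algebra homomorphism $C_2 \to \fB$, producing elements $s_1,s_2,t_1,t_2 \in \fB$ with $t_i s_j = \delta_{i,j}\,p$. Applying $t_1$ to any relation $\lambda_1 s_1 + \lambda_2 s_2 = 0$ forces $\lambda_1 p = 0$, and symmetrically $\lambda_2 p = 0$, so $s_1$ and $s_2$ are $\C$-linearly independent in $\fB$. In particular $\dim_\C \fB \ge 2$, so $\fB \ne \C$.

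For the approximate factorization, I would fix $a\in\fB$ nonzero and $b\in\fB$, and apply SPI of $\fA$ to get sequences $(x_n), (y_n)\subset \fA$ with $x_n a y_n \to b$. The standard compression trick then finishes the argument: left and right multiplication by $p$ are continuous on $\fA$, so
\[
p x_n a y_n p \longrightarrow pbp = b.
\]
Since $a = pap$, the left side equals $(p x_n p)\cdot a\cdot (p y_n p)$, and the compressed sequences $u_n := p x_n p$, $v_n := p y_n p$ lie in $\fB$, exactly as the definition requires.

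I do not foresee a serious obstacle. The Banach algebra hypothesis enters only through continuity of multiplication by $p$, which is automatic; all algebraic content is absorbed by Lemmas \ref{lem:sat=p} and \ref{lem:spipi}. The one place where one might worry is the $\fB \ne \C$ condition, since SPI in the Banach setting only guarantees approximate factorization; but Lemma \ref{lem:spipi} already extracts genuine algebraic factorizations inside the corner $\fB$, and that is precisely what makes the dimension bound go through.
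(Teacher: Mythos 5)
Your proposal is correct and follows essentially the same route as the paper: the compression trick $x_n\mapsto px_np$, $y_n\mapsto py_np$ for the approximate factorization, and Lemma \ref{lem:spipi} to rule out $\fB=\C$ (the paper phrases this as ``$\fB$ is not a division ring,'' while you extract the slightly more explicit bound $\dim_\C\fB\ge 2$ from the Cohn relations). Your write-up just fills in details the paper leaves as ``clear.''
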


\begin{proof}
It is clear that $\fB:=p\fA p\ne 0$ and that if $0\ne a,b\in\fB$ then there are sequences $x_n,y_n\in\fB$ as in Definition \ref{defi:spi}. Moreover, it follows from Lemma \ref{lem:spipi} that $\fB$ is not a division ring; this concludes the proof. 
\end{proof}

\section{The \topdf{$L^p$}{Lp}-operator algebra of a simple acyclic graph}\label{sec:acyc}

\begin{lema}\label{lem:acycfin}
Let $F$ be a finite acyclic graph and $p\in [1,\infty)\setminus\{2\}$. Let $||\ \ ||:L(F)=\cO^p(F)\to\R_{\ge 0}$ be the norm. The following are equivalent for a faithful representation $\rho:L(F)\to \cB(L^p(X))$.
\item[i)] $\rho$ is spatial.
\item[ii)] $\rho$ is isometric for the norm $||\ \ ||$.
\end{lema}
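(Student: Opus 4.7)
Since $F$ is finite and acyclic, path length in $F$ is bounded, so \eqref{eq:le0r} with $N$ larger than that bound yields $L(F)=L(F)_{0,N}$, a finite-dimensional algebra that is already complete in any Banach algebra norm; thus $L(F)=\cO^p(F)$ as sets, and it decomposes as a finite direct sum of matrix algebras
\[
L(F)\cong\bigoplus_{r,v} M_{\cP_{r,v}}(\C),
\]
with the images of the $\alpha\beta^\ast$ playing the role of matrix units and the vertex idempotents $p_v=vv^\ast$ as block identities. By definition the norm $\|\cdot\|$ on $\cO^p(F)$ is the supremum over spatial representations, so on each matrix summand it coincides with Phillips--Viola's spatial $L^p$-matrix norm $M_{n_v}^p$, and on the whole algebra with the $\ell^\infty$ combination of these.

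For (i) $\Rightarrow$ (ii), the universal property of $\cO^p(F)$ gives $\|\rho(a)\|\le\|a\|$ immediately. For the reverse inequality the plan is to exploit the block structure. Since $\rho$ is spatial, each $\rho(p_v)$ is a spatial idempotent on $L^p(X)$; for $p\ne2$ such an idempotent is hermitian and is given by multiplication by the indicator of a measurable set, and the orthogonality $p_vp_w=\delta_{v,w}p_v$ forces these sets to be pairwise disjoint. Hence $L^p(X)$ decomposes as a Banach direct sum on which $\rho$ acts block-diagonally. On each block, $\rho$ restricts to a spatial representation of $M_{n_v}^p$, which by Phillips' uniqueness of spatial representations of $M_n^p$ up to spatial equivalence is isometric for the spatial norm. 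The $L^p$ block-diagonal norm is the maximum of the block norms, so $\|\rho(a)\|=\max_{r,v}\|\rho(a)_{r,v}\|=\|a\|$.

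For (ii) $\Rightarrow$ (i), assume $\rho$ is isometric. Its restriction to each matrix summand is an isometric homomorphism $M_{n_v}^p\to\cB(L^p(X))$, and the key input is the theorem, essentially due to Phillips (and spelled out in the Phillips--Viola theory of spatial $L^p$-AF algebras), that for $p\ne2$ every isometric unital representation of $M_n^p$ on an $L^p$-space is spatial, i.e.\ sends the matrix units to spatial partial isometries. Applying this summand by summand shows that $\rho(\alpha\beta^\ast)$ is a spatial partial isometry for every $\alpha,\beta\in\cP(F)$, which is precisely what it means for $\rho$ to be spatial on $L(F)$. That the spatial data on different summands live coherently on one $L^p(X)$ is again ensured by the hermitian-idempotent classification: each norm-$1$ idempotent $\rho(p_v)$ is given by a measurable support, and algebraic orthogonality makes these supports disjoint, so the local spatial pieces assemble into a global spatial representation.

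The main obstacle is the classification-type input in direction (ii) $\Rightarrow$ (i): locating and invoking the result that, for $p\ne2$, isometric unital homomorphisms $M_n^p\to\cB(L^p(X))$ are automatically spatial. This is exactly the place where the hypothesis $p\ne2$ is used, since in the $C^\ast$ setting any representation of $M_n$ onto its image is isometric regardless of spatiality, and the equivalence would fail. The remaining bookkeeping, assembling block-wise spatial pictures into a spatial representation of $L(F)$ via disjoint hermitian supports, is routine once that input is in hand.
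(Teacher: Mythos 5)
Your overall strategy (reduce to a finite direct sum of matrix algebras, invoke Phillips' theorem that for $p\ne 2$ a representation of $M_n^p$ is spatial iff it is isometric, and use disjointness of the supports of the spatial block idempotents to get the max-norm formula) is exactly the paper's, but your decomposition of $L(F)$ is wrong, and the error propagates. The claim $L(F)=L(F)_{0,N}$ is false: $L(F)$ is not concentrated in degree zero. For the graph with one edge $e$ from $v$ to a sink $w$, the element $e=ew^*$ has degree $|e|-|w|=1$, so it lies in $L(F)_1$, not in any $L(F)_{0,N}$; yet $L(F)\cong M_2(\C)$ with $e$ as an off-diagonal matrix unit. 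Correspondingly, $\bigoplus_{r,v}M_{\cP_{r,v}}$ is only the degree-zero filtration piece \eqref{eq:le0r}, a proper subalgebra of $L(F)$ of strictly smaller dimension whenever some sink receives paths of more than one length. The correct statement, which the paper takes from \cite{lpabook}*{Theorem 2.6.17}, is $L(F)\cong\bigoplus_{v\in\sink(F)}M_{\cP_v}$ where $\cP_v$ is the set of \emph{all} finite paths ending at the sink $v$, with no splitting by length; the matrix units are the $\alpha\beta^*$ with $r(\alpha)=r(\beta)=v$ and $|\alpha|,|\beta|$ arbitrary.

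This matters for both directions of your argument. With your blocks, the matrix units you control are only the $\alpha\beta^*$ with $|\alpha|=|\beta|$, so in (ii)$\Rightarrow$(i) you would conclude spatiality of $\rho(\alpha\beta^*)$ only for degree-zero elements, whereas spatiality of $\rho$ requires it for all $\alpha,\beta$ ending at the same sink (and, via the relation $v=\sum_{\alpha}\alpha\alpha^*$ over paths from a regular $v$ to sinks, that is all that is needed); and in (i)$\Rightarrow$(ii) your max-over-blocks norm formula is computed over the wrong blocks and does not even cover elements outside $L(F)_{0,N}$. Once you replace your decomposition by the sink-indexed one, the rest of what you wrote — Phillips' \cite{lpcuntz}*{Theorem 7.2} applied summand by summand, and the disjoint measurable supports of the images of the block identities giving $\|\rho(x)\|=\max_v\|x_v\|$ — is precisely the paper's proof.
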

\begin{proof}
 Write $\cP_v$ for the set of all paths ending at a vertex $v\in F^0$. Because $F$ is acyclic, for every $v\in\reg(F)$ we may write $v=\sum\alpha\alpha^*$ where the sum runs over all paths starting at $v$ and ending at a sink. It follows from this that $\rho$ is spatial if and only if for all $v\in\sink(F)$ and $\alpha,\beta\in\cP_v$, $\rho(\alpha\beta^*)$ is a spatial partial isometry. By \cite{lpabook}*{Theorem 2.6.17}  the $\C$-linear map 
 \[
 \phi:L(F)\to \bigoplus_{v\in\sink(F)}M_{\cP_v},\ \ \phi(\alpha\beta^*)=\epsilon_{\alpha,\beta}\ \ (\alpha,\beta\in\cP_v, v\in\sink(F))
 \]
 is an isomorphism of algebras. Hence $\rho$ is spatial if and only if the representation $\rho'=\rho\circ\phi^{-1}:\displaystyle{\bigoplus_{v\in\sink(F)}}M_{\cP_v}\to \cB(L^p(X))$ it corresponds to under the isomorphism above maps the matrix units $\epsilon_{\alpha,\beta}$ to spatial partial isometries, which precisely means that the latter restrictions are spatial in the sense of \cite{lpcuntz}*{Definition 7.1}. 
Thus by \cite{lpcuntz}*{Theorem 7.2}, $\rho$ is spatial if and only if the restriction of $\rho'$ to each of the summands $M_{\cP_v}=\cB(\ell^p(\cP_v))$ is isometric with respect to the operator norm. In particular, if the latter condition holds, then $\rho'$ maps the identity matrices of the summands to orthogonal spatial idempotents. It follows that if $$L(F)\owns x=\displaystyle{\sum_{v\in \sink(F)}}x_v$$ with $x_v\in\phi^{-1}(M_{\cP_v})$, then
$||\rho(x)||=\max\{||x_v||_{\cB(\ell^p(\cP_v))}:v\in\sink(F)\}$.
In particular this applies to the representation defining $\cO^p(F)$; thus  $||x||_{\cO(F)}=||\rho(x)||$.
\end{proof}
We introduce some notation that will be used below. Let $F\subset E$ be a finite complete subgraph and $v\in F^0$. For $v\in \reg(F)\cap\inf(E)$, we consider the following elements of $L(E)$
\[
m_v^F=\sum_{e\in F,\ s(e)=v}ee^*, \ \ q_v^F=v-m_v^F.
\]
\begin{prop}\label{prop:acyccount}
Let $E$ be a countable acyclic graph and $p\in [1,\infty)\setminus\{2\}$. Then $\cO^p(E)$ is a spatial $AF$-algebra in the sense of\ \ \cite{chrismg}*{Definition 9.1}.   
\end{prop}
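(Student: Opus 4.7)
The plan is to realize $\cO^p(E)$ as the closure of an increasing union of finite-dimensional spatial subalgebras, each one isometrically isomorphic to the $L^p$-operator algebra of an auxiliary finite acyclic graph, hence (by Lemma \ref{lem:acycfin}) to a finite direct sum of spatial matrix algebras.

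First, since $E$ is countable, I would choose an increasing chain $F_1\subset F_2\subset\cdots$ of finite complete subgraphs with $\bigcup_n F_n=E$. For each $n$, construct an auxiliary finite acyclic graph $\tilde F_n$ from $F_n$ by adjoining, for each $v\in\reg(F_n)$ with $s_E^{-1}(v)\ne s_{F_n}^{-1}(v)$, a new sink $v_*$ and a single new edge $e_v:v\to v_*$. Then define $\iota_n:L(\tilde F_n)\to L(E)\subset\cO^p(E)$ to be the identity on the generators inherited from $F_n$ and to send both $e_v$ and $v_*$ to the idempotent $q_v^{F_n}\in L(E)$. The Leavitt relations at each vertex of $\tilde F_n$ transfer under $\iota_n$ either to relations already holding in $L(E)$ or to the tautology $m_v^{F_n}+q_v^{F_n}=v$; the relations $e_v^* e_v=v_*$ and $e_v e_v^*\le v$ become $(q_v^{F_n})^2=q_v^{F_n}\le v$. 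Hence $\iota_n$ is a well-defined algebra homomorphism.

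Next I would show that $\iota_n$ is faithful and spatial. Faithfulness: since $\tilde F_n$ is finite acyclic, $L(\tilde F_n)$ is a direct sum of matrix algebras indexed by its sinks, and it suffices to see that each sink idempotent has nonzero image; for a new sink $v_*$ this holds because for any $e\in E^1\setminus F_n^1$ with $s(e)=v$ one has $e^*q_v^{F_n}e=e^*(v-m_v^{F_n})e=r(e)\ne 0$. Spatiality: the generators coming from $F_n$ are spatial by the defining representation of $\cO^p(E)$, while $q_v^{F_n}=v-m_v^{F_n}$ is a spatial idempotent because $m_v^{F_n}$ is a sum of mutually orthogonal spatial idempotents subordinate to the spatial idempotent $v$. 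Lemma \ref{lem:acycfin} then promotes $\iota_n$ to an isometric identification $\cO^p(\tilde F_n)\cong\fA_n\subset\cO^p(E)$, exhibiting $\fA_n$ as a finite direct sum of spatial matrix algebras.

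Finally I would verify nesting and density. For $\fA_n\subset\fA_{n+1}$: every $\alpha\beta^*$ with $\alpha,\beta\in\cP(F_n)$ already lies in $\fA_{n+1}$, and for the auxiliary idempotents one has $q_v^{F_n}=q_v^{F_{n+1}}+\sum_{e\in F_{n+1}^1\setminus F_n^1,\ s(e)=v}ee^*$, interpreting $q_v^{F_{n+1}}=0$ when $v$ has had all its remaining edges absorbed into $F_{n+1}$; each summand is in $\fA_{n+1}$. For density, every $x\in L(E)$ is a finite linear combination of terms $\alpha\beta^*$ whose edges jointly lie in some $F_n$, so $x\in\fA_n$; combined with the density of $L(E)$ in $\cO^p(E)$ this yields $\cO^p(E)=\overline{\bigcup_n\fA_n}$, which is the defining property of a spatial AF-algebra in the sense of \cite{chrismg}*{Definition 9.1}. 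The main obstacle is confirming that the abstract idempotent $q_v^{F_n}$ really is a \emph{spatial} idempotent of $\cO^p(E)$, which is what allows Lemma \ref{lem:acycfin} to promote algebraic injectivity to an isometric identification; the verification that $\iota_n$ respects the Leavitt relations is routine bookkeeping that must be performed with care in the mixed case where $v\in\reg(F_n)$ has extra outgoing edges in $E$ without being an infinite emitter.
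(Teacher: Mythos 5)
Your overall strategy is the same as the paper's: exhaust $E$ by finite complete subgraphs $F_n$, attach auxiliary sinks to account for the edges that $F_n$ misses at infinite emitters, embed the Leavitt path algebra of the resulting finite acyclic graph into $L(E)$, and invoke Lemma \ref{lem:acycfin} to upgrade this to an isometric spatial inclusion. However, your homomorphism $\iota_n$ is not well defined as stated, and this is a genuine gap rather than a bookkeeping issue. You keep $\iota_n(v)=v$ for $v\in F_n^0$ while setting $\iota_n(v_*)=q_v^{F_n}$. In $L(\tilde F_n)$ the distinct vertices $v$ and $v_*$ satisfy $v\,v_*=0$, but in $L(E)$ one has
\[
\iota_n(v)\,\iota_n(v_*)=v\,q_v^{F_n}=v(v-m_v^{F_n})=v-m_v^{F_n}=q_v^{F_n}\neq 0,
\]
where the last inequality follows from your own faithfulness computation $e^*q_v^{F_n}e=r(e)\ne 0$. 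So the vertex-orthogonality relation of $L(\tilde F_n)$ is violated and $\iota_n$ does not exist. The attempted repair of sending $v\mapsto m_v^{F_n}$ instead does not work with your graph either: the relation $e^*e=r(e)=v$ for $e\in F_n^1$ with $r(e)=v$ would then force $\iota_n(e)=em_v^{F_n}$, after which the relation (CK2) at $s(e)$ fails unless the edges into $v$ are themselves duplicated. This is exactly what the construction $\tilde F=F(\reg(E)\cap\reg(F))$ of \cite{lpabook}*{Definition 1.5.16}, used in the paper, does: it adds a new vertex $v'$ \emph{and} a new edge $e'$ for every $e$ with $r(e)=v$, and sends $v\mapsto m_v^{F}$, $v'\mapsto q_v^{F}$, $e\mapsto em_{r(e)}^{F}$, $e'\mapsto eq_{r(e)}^{F}$, so that all relations, including vertex orthogonality, are respected. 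A single new edge $v\to v_*$ cannot substitute for this edge duplication.

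The remaining ingredients of your argument (spatiality of $q_v^{F_n}$, nonvanishing of the new sink idempotents, the identity $q_v^{F_n}=q_v^{F_{n+1}}+\sum_{e\in F_{n+1}^1\setminus F_n^1,\ s(e)=v}ee^*$ for nesting, and density of $\bigcup_n\fA_n$) are sound and carry over verbatim once the auxiliary graph and homomorphism are replaced by the standard ones; note also that by completeness of $F_n$ the vertices needing a new sink are exactly those in $\reg(F_n)\cap\inf(E)$, so the parenthetical case where $q_v^{F_{n+1}}$ vanishes does not actually occur.
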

\begin{proof}
   For each finite complete subgraph $F\subset E$, let $\tilde{F}=F(\reg(E)\cap\reg(F))$ be as in 
    \cite{lpabook}*{Definition 1.5.16}. Set $Y^F=\reg(F)\cap\inf(E)$. The graph $\tilde{F}$ has vertices $\tilde{F}^0=F^0\sqcup \{v': v\in Y^F\}$ and edges $\tilde{F}^1=F^1\sqcup \{e':r(e)\in Y^F\}$; its source and restriction maps extend those of $F$, and we have $s(e')=s(e)$ and $r(e')=r(e)'$.  It follows from the proof of \cite{lpabook}*{Theorem 1.5.18} that the inclusion $F\subset E$ induces a $*$-algebra homomorphism 
    $\iota_{F}:L(\tilde{F})\to L(E)$ determined by 
    \begin{gather*}
    \iota_{F}(v)=\left\{\begin{matrix} v& v\notin Y^F\\ m_v^F & v\in Y^F\end{matrix}\right., \ \ \iota_{F}(v')=q_v^F,\\
    \iota_{F}(e)=\left\{\begin{matrix} e& r(e)\notin Y^F\\ em_{r(e)}^F & r(e)\in Y^F\end{matrix}\right., \ \ \iota_{F}(e')=eq_{r(e)}^F.
    \end{gather*}    
Moreover 
$\iota_F$ is injective by \cite{lpabook}*{Theorems 1.5.8 and 1.6.10}, and we have $L(E)=\colim_FL(\tilde{F})$. Because $E$ is countable, its finite complete subgraphs are countably many, and thus we may choose a cofinal ascending chain $\{F_n:n\in\N\}$ of finite complete subgraphs of $E$ \cite{stex}, so that $L(E)=\colim_{n\in \N}L(\tilde{F}_n)$. Let $\rho:L(E)\to\cB(L^p(X))$ be a faithful spatial representation such that $\ol{\rho(L(E))}=\cO^p(E)$. Let $F\subset E$ be a finite complete subgraph. It follows from the explicit description of $\iota_F$ above that $\rho_F:=\rho\circ\iota_F$ is an (injective) spatial representation. Since by \cite{lpabook}*{Proposition 1.5.21}, $\tilde{F}$ is acyclic, it follows from Lemma \ref{lem:acycfin} that $\rho_F$ defines an isometric inclusion $O^p(\tilde{F})\to \cO^p(E)$. Therefore if $F\subset G\subset E$ is another finite complete subgraph, then $\cO^p(\tilde{F})\to\cO^p(\tilde{G})$ is isometric, which again by Lemma \ref{lem:acycfin} implies that it is spatial in the sense of \cite{chrismg}*{Definition 8.13}. Summing up,
$\{\cO^p(\tilde{F}_n):n\in\N\}$ is a spatial $L^p$-$AF$ direct system in the sense of \cite{chrismg}*{Definition 9.1} and $\cO^p(E)=\colim_n\cO^p(\tilde{F}_n)$ as Banach algebras, which shows that $\cO^p(E)$ is a spatial $L^p$-$AF$-algebra in the sense of \cite{chrismg}.
\end{proof}

\begin{prop}\label{prop:simpacyc}
 Let $E$ be a countable acyclic graph and $p\in [1,\infty)\setminus\{2\}$. If $E$ is simple, then $\cO^p(E)$ is a simple Banach algebra.
\end{prop}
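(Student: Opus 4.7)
The plan is to take an arbitrary closed two-sided ideal $J\subset \cO^p(E)$ with $J\ne 0$ and to show that necessarily $J=\cO^p(E)$, which by Definition \ref{defi:spi} means $\cO^p(E)$ is simple.

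First I would invoke Proposition \ref{prop:acyccount} to present
\[
\cO^p(E)=\colim_{n}\cO^p(\tilde{F}_n)
\]
as a spatial $L^p$-$AF$-algebra in the sense of \cite{chrismg}, where each layer $\cO^p(\tilde{F}_n)=L(\tilde{F}_n)$ is a finite-dimensional semisimple algebra (a finite direct sum of matrix blocks, by \eqref{eq:le0r} applied to $\tilde{F}_n$, which is acyclic by \cite{lpabook}*{Proposition 1.5.21}) whose image under $\iota_{F_n}$ sits inside $L(E)\subset \cO^p(E)$. I would then bring in the ideal structure theorem of Phillips--Viola for spatial $L^p$-$AF$-algebras \cite{chrismg2}, to the effect that every closed two-sided ideal of such an algebra is the closure of its union of intersections with the layers:
\[
J=\overline{\bigcup_{n}\bigl(J\cap \cO^p(\tilde{F}_n)\bigr)}.
\]
In particular, since $J\ne 0$, there must exist some $n_0$ for which $J\cap \cO^p(\tilde{F}_{n_0})\ne 0$, and as this intersection is contained in $L(E)$, we conclude $J\cap L(E)\ne 0$.

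Next I would apply Theorem \ref{thm:record}(i): since $E$ is simple, the ring $L(E)$ is simple. Hence $J\cap L(E)$, being a nonzero two-sided ideal of the simple ring $L(E)$, must coincide with $L(E)$. This yields $L(E)\subset J$, and since $L(E)$ is dense in $\cO^p(E)$ by construction of the latter as a completion, taking closures gives $J=\cO^p(E)$, which proves simplicity.

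The main obstacle is locating and correctly applying the ideal structure theorem from \cite{chrismg2}: one must verify that the presentation of $\cO^p(E)$ produced by Proposition \ref{prop:acyccount} fits its hypotheses (in particular that the connecting maps $\cO^p(\tilde{F}_n)\to \cO^p(\tilde{F}_{n+1})$ are spatial in their sense, which is already noted in the proof of that proposition) and that the conclusion one extracts really is the AF-type approximation property that every nonzero closed ideal meets some layer nontrivially. Once this is secured, the reduction to the simplicity of $L(E)$ and the density of $L(E)$ in $\cO^p(E)$ close the argument with no further work.
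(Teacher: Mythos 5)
Your proposal is correct and follows essentially the same route as the paper's own proof: both use the spatial $L^p$-$AF$ presentation $\cO^p(E)=\colim_n\cO^p(\tilde F_n)$ from Proposition \ref{prop:acyccount} together with the Phillips--Viola ideal structure results of \cite{chrismg2} to find a layer meeting the ideal nontrivially, observe that this layer lies in $L(E)$, and then conclude via simplicity of $L(E)$ and density. The only cosmetic difference is that the paper cites \cite{chrismg2}*{Theorem 3.5 and Lemma 3.8} directly for the existence of a finite complete subgraph $F$ with $0\ne\cO^p(\tilde F)\cap I=L(\tilde F)\cap I$, rather than stating the full closure formula for the ideal.
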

\begin{proof}
Let $I\triqui\cO^p(E)$ be a closed two-sided ideal. By \cite{chrismg2}*{Theorem 3.5 and Lemma 3.8} and the proof of Proposition \ref{prop:acyccount}, there exists a finite complete subgraph $F\subset E$ such that $0\ne\cO^p(\tilde{F})\cap I=L(\tilde{F})\cap I$. Because $E$ is simple, so is $L(E)$, whence $L(E)\subset I\subset \cO^p(E)$ and therefore $I=\cO^p(E)$ since $I$ is closed and 
$\overline{L(E)}=\cO^p(E)$.
\end{proof}

\section{\topdf{$\cO^p(E)$}{Op(E)} SPI implies \topdf{$E$}{E} SPI}\label{sec:spiOspiE}

\begin{teo}\label{teo:E-sip}
Let $E$ be a countable graph and $p\in [1,\infty)$. If $\mathcal{O}^p(E)$ is an SPI Banach algebra, then $E$ is an SPI graph.
\end{teo}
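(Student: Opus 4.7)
My plan is to verify the two defining properties of an SPI graph separately: first that $E$ is simple (cofinal with every cycle admitting an exit), then that $E$ contains at least one cycle.

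For the simplicity of $E$, I use a short chain. Any SPI Banach algebra $\fA$ is automatically simple as a Banach algebra: given a nonzero closed two-sided ideal $I\triqui\fA$ and $0\ne a\in I$, for any $b\in\fA$ the sequence $x_nay_n$ of Definition \ref{defi:spi} lies in $I$ and converges to $b$, forcing $I=\fA$. Applied to $\cO^p(E)$, this says that the kernel of any nonzero bounded homomorphism from $\cO^p(E)$ to any Banach algebra is trivial; in particular every nonzero spatial representation of $\cO^p(E)$ is injective. Theorem \ref{thm:record} then yields $L(E)$ simple as a ring, and hence $E$ simple.

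For the existence of a cycle I argue by contradiction, assuming $E$ acyclic. For $p\ne 2$, Proposition \ref{prop:acyccount} presents $\cO^p(E)$ as the Banach colimit of the finite-dimensional algebras $L(\tilde F_n)=\cO^p(\tilde F_n)$ coming from a cofinal ascending chain of finite complete subgraphs (for $p=2$ the analogous fact is classical). Fix any $v\in E^0$; for $n$ large $v$ lies in $L(\tilde F_n)$, so the corner $v\cO^p(E)v$ is the closed union of the finite-dimensional unital subalgebras $B_n:=v L(\tilde F_n)v$. By Corollary \ref{coro:corner}, $v\cO^p(E)v$ is SPI, and Lemma \ref{lem:spipi} supplies a unital homomorphism $C_2\to v\cO^p(E)v$, i.e.\ elements $s_1,s_2,t_1,t_2\in v\cO^p(E)v$ with $s_it_j=\delta_{ij}v$. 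The contradiction I aim for exploits the rigidity that $C_2$ admits no unital homomorphism into any finite-dimensional unital algebra $B$ with unit $e$: $s_1t_1=e$ forces $t_1$ right-invertible and thus invertible, then $s_1t_2=0$ gives $t_2=0$, clashing with $s_2t_2=e\ne 0$.

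The main obstacle is transferring this finite-dimensional rigidity through the AF approximation. Given small $\epsilon>0$, pick $n$ large enough that each $s_i,t_j$ is within $\epsilon$ of some $s_i',t_j'\in B_n$, so $s_i't_j'$ differs from $\delta_{ij}v$ by $O(\epsilon)$ in norm. In the finite-dimensional unital Banach algebra $B_n$ the element $s_1't_1'$ is then a Neumann-invertible perturbation of $v$, and finite-dimensionality upgrades the resulting right-invertibility of $t_1'$ to two-sided invertibility with $\|(t_1')^{-1}\|$ controlled. Substituting $s_1'=(s_1't_1')(t_1')^{-1}$ into $s_1't_2'=O(\epsilon)$ forces $\|t_2'\|=O(\epsilon)$, whereas $\|s_2't_2'\|\ge\|v\|-O(\epsilon)$ remains bounded away from zero because $v$ is a spatial idempotent of norm one. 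For $\epsilon$ sufficiently small these estimates are incompatible, yielding the desired contradiction and producing the cycle.
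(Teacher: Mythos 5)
Your proposal is correct, and its first half (SPI $\Rightarrow$ simple as a Banach algebra $\Rightarrow$ every nonzero spatial representation is injective $\Rightarrow$ $L(E)$ and hence $E$ simple) is essentially the paper's opening move. Where you genuinely diverge is in ruling out acyclicity. The paper fixes a vertex $w$ that connects to no cycle, passes to the hereditary subgraph $H$ of vertices reachable from $w$, identifies $w\cO^p(E)w$ with $w\cO^p(H)w$, and then compares nonstable $K$-theory: Proposition \ref{prop:acyccount} together with the Phillips--Viola structure results makes the group completion $\cV(\cO^p(H))\to K_0(\cO^p(H))$ injective, while \cite{agp} shows the $\cV$-monoid of a unital purely infinite simple ring never has injective group completion. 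You instead assume $E$ acyclic outright --- legitimate, since for a cofinal graph ``some vertex misses every cycle'' is equivalent to ``$E$ has no cycle'' --- and run a hands-on finiteness argument: the corner $v\cO^p(E)v$ is SPI by Corollary \ref{coro:corner}, hence receives a unital copy of $C_2$ by Lemma \ref{lem:spipi}, yet it is the closure of the increasing finite-dimensional unital subalgebras $vL(\tilde F_n)v$, and your Neumann-series perturbation correctly shows $C_2$ cannot map unitally into such a closure: $s_1't_1'$ is invertible with $\|(s_1't_1')^{-1}\|\le 2$, the left inverse $(s_1't_1')^{-1}s_1'$ of $t_1'$ becomes a two-sided inverse of controlled norm by finite-dimensionality, whence $\|t_2'\|=O(\epsilon)$ contradicts $\|s_2't_2'\|\ge\|v\|-O(\epsilon)\ge 1-O(\epsilon)$. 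What your route buys is elementarity and self-containment --- no hereditary subgraph, no appeal to \cite{agp} or to the ideal-structure results of \cite{chrismg2} --- at the cost of the slightly sharper local statement the paper proves (that the corner at \emph{each} vertex missing all cycles fails to be SPI); both routes lean on Proposition \ref{prop:acyccount}, which is stated only for $p\ne 2$, and your parenthetical that the $p=2$ (AF $C^*$-algebra) case is classical is the right patch --- the paper tacitly needs the same one. (One cosmetic slip: $s_1t_1=e$ makes $t_1$ \emph{left}-invertible rather than right-invertible, but the finite-dimensional conclusion you draw from it is unaffected.)
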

\begin{proof}
Since $\mathcal{O}^p(E)$ is simple as a Banach algebra, then it is simple as an $L^p$-operator algebra, and therefore  $L(E)$ is simple by \cite{we}*{Theorem 10.1}. Hence $E$ is simple by the simplicity theorem \cite{lpabook}*{Theorem 2.9.1}; to show that $E$ is SPI, we must prove that every vertex connects to a cycle. 

By Corollary \ref{coro:corner} it suffices to show that if a vertex $w\in E^0$ does not connect to any cycle, then $w\mathcal{O}^p(E)w$ is not purely infinite simple. Let $v\in E^0$; following \cite{lpabook}*{top of page 63}, we write $v\le w$ if there is a path $\mu$ with $s(\mu)=w$ and $r(\mu)=v$. 
Let $H$ the graph with $H^0 := \{ v \in E^0 \ : \ w \geq v \}$, $H^1 := s_E^{-1}(H^0)$ and $s_H, r_H$ the restrictions to $H^1$ of $s_E$ and $r_E$. This is the graph considered in the proof of \cite{aap}*{Proposition 9}, where the definition of $\le $ is reversed. By \cite{aap}, $wL(E)w=wL(H)w$, and therefore 
\begin{equation}\label{eq:mismeq}
    w\cO^p(E)w=\ol{wL(E)w}=\ol{wL(H)w}=w\cO^p(H)w,
\end{equation}
since the map $A\mapsto wAw$ is contractive. 

Remark that the family 
\begin{equation}\label{eq:approx1}
\{v_F=\sum_{v\in F} w\ \ |\ \ F\subset H^0\text{ finite}\}    
\end{equation}
is a net of uniformly bounded approximate units in $\cO^p(H)$. Next observe that by definition of $H$, for every $v\in H^0$ there is a path $\alpha_v$ such that $s(\alpha_v)=w$, $r(\alpha_v)=v$. Hence if $w\in F\subset H^0$ is finite, then 
\begin{equation}\label{eq:wfull}
v_F=\sum_{v\in F}\alpha_v^*\alpha_v\in v_F\cO^p(H)w\cO^p(H)v_F=(v_F\cO^p(H)v_F)w(v_F\cO^p(H)v_F).
\end{equation}
Hence using \eqref{eq:wfull} at the second step and \eqref{eq:mismeq} at the last, we obtain the following identities for the monoids of Murray-von Neumann equivalence classes of idempotent matrices
\begin{equation}\label{eq:vtwosides}
\cV(\cO^p(H))=\colim_{F\owns w}\cV(v_F\cO^p(H)v_F)=\colim_{F\owns w}\cV(w\cO^p(H)w)=\cV(w\cO^p(E)w).
\end{equation}
It follows from Proposition \ref{prop:acyccount} that for the left hand side of \eqref{eq:vtwosides} the group completion map $\cV(\cO^p(H))\to \cV(\cO^p(H))^+=K_0(\cO^p(H))$ is injective. However the right hand side is the $\cV$-monoid of the unital simple purely infinite ring $R=w\cO^p(E)w$ and thus by \cite{agp}*{Proposition 2.1 and Corollary 2.2}, the subsemigroup $\cV(R)\supset G=\cV(R)\setminus\{0\}$ is a group and the group completion of $\cV(R)$ is the map $\cV(R)\to G$ that restricts to the identity on $G$ and maps $0$ to the zero element of $G$. In particular $\cV(R)\to \cV(R)^+$ is not injective. This concludes the proof.
\end{proof}

\section{\topdf{$E$}{E} SPI implies \topdf{$\cO^p(E)$}{Op(E)} SPI}\label{sec:EspiOspi}

The purpose of this section is to prove the following. 

\begin{teo}\label{teo:OE-sip}
Let $p\in [1,\infty)$ and let $E$ be a purely infinite simple countable graph. Then $\mathcal{O}^p(E)$ is a simple purely infinite Banach algebra.
\end{teo}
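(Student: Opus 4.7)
The strategy is to follow Phillips' approach from \cite{chrisimple}*{Theorem 5.14}, adapted to the graph setting. Since $E$ is SPI, $L(E)$ is SPI as a ring by Theorem \ref{thm:record}(ii). Hence, given $0\ne a,b\in \cO^p(E)$ and $\epsilon>0$, the task of producing $x,y\in \cO^p(E)$ with $\|xay-b\|<\epsilon$ reduces to the following \emph{reduction claim}: for every $0\ne a\in \cO^p(E)$ and every $\delta>0$ there exist $x,y\in\cO^p(E)$ and a vertex $v\in E^0$ with $\|xay-v\|<\delta$. Indeed, granting the claim, approximate $b$ by $b'\in L(E)$, pick $s,t\in L(E)$ with $svt=b'$ using SPI of $L(E)$ (applied to the nonzero vertex $v$ in the simple ring $L(E)$), and note that $sx\cdot a\cdot yt$ then approximates $b$ within any prescribed $\epsilon$.

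The plan for the reduction claim is in three steps. \emph{Moving to the core:} by Lemma \ref{lem:record}(ii) there is $r\in\Z$ with $\Phi_r(a)\ne 0$. Approximate $\Phi_r(a)$ by $\sum_j\lambda_j\alpha_j\beta_j^*\in L(E)_r$ with $|\alpha_j|-|\beta_j|=r$, fix an index $j_0$, and choose homogeneous $\mu,\nu\in L(E)$ of total degree $-r$ (for example $\mu=s(\alpha_{j_0})$ and $\nu=\beta_{j_0}\alpha_{j_0}^*$) so that by Lemma \ref{lem:record}(i) and the relations $\alpha^*\alpha=r(\alpha)$ in $L(E)$, the $j_0$-monomial contributes the nonzero idempotent $\lambda_{j_0}\alpha_{j_0}\alpha_{j_0}^*$ to $\Phi_0(\mu a\nu)=\mu\Phi_r(a)\nu$. \emph{Approximating in the AF core:} $\Phi_0(\mu a\nu)\in\cO^p(E)_0=\ol{\bigcup_n \cO^p(E)_{0,n}}$, and by \eqref{eq:le0r} (applied to finite complete subgraphs, in the spirit of Proposition \ref{prop:acyccount}) each $\cO^p(E)_{0,n}$ is a finite direct sum of matrix algebras $M_{\cP_{n,v}}$, so $\Phi_0(\mu a\nu)$ is close to a nonzero element $T$ of such a matrix subalgebra. \emph{Extracting a vertex:} in a direct sum of matrix algebras, $T$ can be cut down by two-sided multiplication with matrix units of the form $\gamma\gamma^*,\delta\delta^*$ to a scalar multiple $\lambda\gamma\delta^*$ of a single matrix unit; then $\gamma^*(\lambda\gamma\delta^*)\delta=\lambda\, r(\delta)$, and rescaling yields the vertex $v=r(\delta)$.

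The main obstacle will be the first step. For $p=2$, one reduces to the core via $\Phi_0(a^*a)\ne 0$, which relies on positivity and faithfulness of the conditional expectation. For $p\ne 2$ there is no canonical involution on $\cO^p(E)$ and no positivity, so the argument must be carried out entirely algebraically and must carry quantitative norm estimates: after selecting $\mu,\nu$ so that a chosen monomial contributes nontrivially to $\Phi_0(\mu a\nu)$, one has to verify that the other monomials in the approximation of $\Phi_r(a)$, as well as the approximation error itself, do not cancel the chosen contribution; this is where the indices $j_0$ and the paths $\mu,\nu$ have to be selected with care (for instance by choosing $\beta_{j_0}$ maximal among the $\beta_j$ appearing, to control the cross terms $\beta_j^*\beta_{j_0}$). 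A further subtlety is that $\mu a\nu$ itself is not in $\cO^p(E)_0$, so additional multiplication by projections from the core may be required to suppress the higher graded components before step two applies cleanly. Controlling these estimates is the technical heart of the proof, and is precisely where Phillips' calculations with spatial partial isometries in \cite{chrisimple} have to be adapted to arbitrary graphs.
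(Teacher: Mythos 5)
Your high-level architecture --- reduce SPI of $\cO^p(E)$ to producing a vertex from $xay$, pass through the gauge expectations $\Phi_n$ to the AF core $\cO^p(E)_0$, and exploit its matricial filtration --- is indeed the paper's (and Phillips') strategy, and your outer reduction using SPI of $L(E)$ and density is fine. But each of the three inner steps has a genuine gap, and the step you flag as ``the technical heart'' is resolved in the paper by a different and much cleaner device than the one you sketch. For ``moving to the core'' there is no need to approximate $\Phi_r(a)$ by monomials and control cross terms: after first reducing to a row-finite graph \emph{without sources} (a reduction you omit entirely, carried out via source removal and desingularization), one picks $w\in E^0$ with $\Phi_n(a)w\neq 0$ and a path $\alpha$ of length $n$ with $r(\alpha)=w$; then Lemma \ref{lem:record}(i) gives $\Phi_0(a\alpha^*)=\Phi_n(a)\alpha^*\neq 0$ \emph{exactly}, with no estimates. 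Your worry about cancellations is an artifact of the wrong reduction, but as written that step of your proof does not go through.

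The two remaining gaps are more serious. First, ``approximate $\Phi_0(\mu a\nu)$ by $T\in\cO^p(E)_{0,n}$ and cut $T$ down to a matrix unit'' is quantitatively circular: the coefficient $\lambda$ of the extracted matrix unit $\lambda\gamma\delta^*$ can be arbitrarily small compared with $\|T\|$ (a rank-one idempotent in $M_N$ has entries of size $1/N$), so you cannot arrange $\|\Phi_0(\mu a\nu)-T\|<|\lambda|$ in advance. The paper instead uses the Phillips--Viola ideal structure theorem \cite{chrismg2} to find an honest $\alpha\alpha^*$ in the closed ideal of $\cO^p(E)_0$ generated by $\Phi_0(a)$; this yields $\bigl\|\alpha\alpha^*-\sum_{k=1}^d b_kac_k\bigr\|<1/8$, a \emph{sum} of products, whereas SPI demands a \emph{single} product $x_0ay_0$. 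Converting the sum into one product is done with the equal-length closed paths of Lemma \ref{lem:caminos} and the maps $\psi_m$ and $f_{j,k}=\psi_m\bigl(\sum_{w}\gamma^w_j(\gamma^w_k)^*\bigr)$ of Lemma \ref{lem:endo}, acting as Cuntz-type matrix units; nothing in your plan plays this role. Second, having achieved $x_0\Phi_0(a)y_0=v$, i.e.\ $\Phi_0(x_0ay_0)=v$, one must still kill the nonzero-degree part of $x_0ay_0$; your proposed ``multiplication by projections from the core'' cannot do this, since degree-zero idempotents preserve every graded component. The paper's tool is Lemma \ref{lem:trunca}: conjugating by a sufficiently long closed path $\sigma$ chosen along an aperiodic infinite path annihilates a prescribed finite set of monomials of nonzero degree, giving $\|\sigma^*a\sigma-v\|<1/2$ and hence invertibility in $v\cO^p(E)v$. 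Without these three ingredients the proof does not close.
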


We divide the proof in two parts, with several lemmas in between.

\begin{proof}[Proof of Theorem \ref{teo:OE-sip}, part 1:] \emph{reduction to the case when $E$ is row-finite without 
\goodbreak
\noindent sources.} Assume the theorem known for row-finite graphs without sources. If $E$ is row-finite, let $E_{\mathfrak{r}}$ be the graph that results upon source removal, and $\phi_\mathfrak{r} : \mathcal{O}^p(E) \to \mathcal{O}^p(E_\mathfrak{r})$ the natural inclusion of \cite{we}*{Section 8}. For each finite subset $F\subset E^0$, let $v_F$ be as in \eqref{eq:approx1}.
One checks that that $v_F L(E_\mathfrak{r}) v_F \subseteq L(E)$; it follows that $v_F \mathcal{O}^p(E_\mathfrak{r}) v_F \subseteq \mathcal{O}^p(E)$. For each $0\ne a \in \mathcal{O}^p(E)$ we may choose an $F$ such that $v_Fav_F\neq 0$. Let $v\in E^0$. Since $E_\mathfrak{r}$ is purely infinite simple without sources, by assumption there are $x', y' \in \mathcal{O}^p(E_\mathfrak{r})$ such that $x' v_F a v_F y' = v$. So for $x" = v_Fx' v_F $ and $y" = v_F y' v_F$ we have $x",y"\in \mathcal{O}^p(E)$ and $x" a y" = v\in L(E)$. If now $b\in\cO^p(E)$ then for each $n$ there exist $b_n\in L(E)$ with $||b-b_n||<1/n$ and because $E$ is SPI, also $u_n,z_n\in L(E)$ with $u_nvz_n=b_n$. Summing up, the sequences $x_n=u_nx"$, $y_n=y"z_n$ satisfy $x_nay_n\to b$. We have proven that if the theorem holds for row-finite graphs without sources then it holds for all row-finite graphs.  If now $E$ has infinite emitters, we may consider its row-finite desingularization $E\to E_{\mathfrak{d}}$ as in \cite{we}*{Section 7}; again one checks that $v_FL(E_{\mathfrak{d}})v_F\subset L(E)$ for each finite $F\subset E^0$ and a similar argument as above shows that if the theorem holds for $E_{\delta}$ then it does also for $E$.
\end{proof}

\begin{lema}\label{lem:caminos}
Let $E$ be a purely infinite simple graph and $\mathcal{V} \subseteq E^0$ a finite subset such that each $v \in \mathcal{V}$ is the base of, at least, one cycle. Then, for every $m\ge 1$ and $v\in\cV$ there exist $\ell\ge 1$  and $m$ distinct closed paths $\gamma_1^{v}, \dots, \gamma_m^{v}$ of length $\ell$ based at $v$ .
\end{lema}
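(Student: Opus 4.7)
The plan is to use the exit condition and cofinality to produce, for each $v \in \mathcal{V}$, two distinguishable closed paths at $v$, then combine them combinatorially into many closed paths of the same length at $v$, and finally equalize lengths across $\mathcal{V}$ by taking powers.

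First, I would fix $v \in \mathcal{V}$ and a cycle $c_v = e_1 \cdots e_k$ based at $v$. By Definition \ref{defi:Esimple}, $c_v$ admits an exit: there are $1 \le i \le k$ and an edge $f \ne e_i$ with $s(f) = s(e_i)$. Applying cofinality to the vertex $r(f)$ and the cycle $c_v$, I obtain a (possibly trivial) path $\mu$ from $r(f)$ to some $s(e_j)$ on $c_v$, and I set
\[
\beta_v := e_1 \cdots e_{i-1} \, f \, \mu \, e_j \cdots e_k,
\]
a second closed path at $v$ whose $i$-th edge is $f$ rather than $e_i$; note that $|\beta_v| \ge i+1$.

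Next, I would consider concatenations in the two-letter alphabet $\{c_v, \beta_v\}$. Write $k = |c_v|$ and $\ell = |\beta_v|$. For fixed $a,b \ge 0$, each of the $\binom{a+b}{a}$ words $w = w_1 \cdots w_{a+b}$ with $a$ letters equal to $c_v$ concatenates to a closed path at $v$ of length $ak + b\ell$. A parsing argument shows these are pairwise distinct: once the first $r-1$ blocks of $w$ have been identified, their total length $P$ is known, and the $(P+i)$-th edge of the concatenated path equals $e_i$ or $f$ according as the $r$-th block is $c_v$ or $\beta_v$, which determines the latter inductively. Choosing $a,b$ so that $\binom{a+b}{a} \ge m$ (for instance $a = b = m$) produces $m$ distinct closed paths at $v$ of a common length $\ell_v := ak + b\ell$.

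Finally, to make the length the same for all $v \in \mathcal{V}$, I would set $L = \operatorname{lcm}\{\ell_v : v \in \mathcal{V}\}$ and replace each path $\alpha$ of length $\ell_v$ produced above by its iterate $\alpha^{L/\ell_v}$, which is a closed path at $v$ of length $L$; distinctness is preserved since two distinct paths of common length $\ell_v$ already differ within their first $\ell_v$ edges. The only subtle point is the parsing step: it is essential that the exit edge $f$ occupies a predictable position within the $\beta_v$-block, so that distinct block-words cannot accidentally produce the same concatenated edge sequence. Once this distinguishing edge has been located via the exit condition, the remainder of the argument is purely combinatorial.
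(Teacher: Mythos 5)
Your proposal is correct and follows essentially the same route as the paper: use the exit condition plus cofinality to produce a second closed path at $v$ distinguishable from the chosen cycle, form words in the two paths to get $m$ distinct closed paths of a common length at each $v$ (the paper uses $\alpha_v^i\beta_v\alpha_v^{m-i}$, you use a binomial count with the same parsing argument), and then equalize lengths across $\mathcal{V}$ by taking powers up to the least common multiple.
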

\begin{proof}
For each $v \in \mathcal{V}$, take a cycle $\alpha_v$ based at $v$. Since every cycle has an exit and $E$ is cofinal, there is a closed path $\beta_v$ based at $v$ such that $\alpha_v^* \beta_v = \beta_v^* \alpha_v = 0$. Thus distinct words on $\alpha_v$ and $\beta_v$ give distinct  closed paths based at $v$. Hence for $n_v$ sufficiently large there are closed paths $\delta_1^v, \dots, \delta_m^v$ based at $v$, all of the same length $n_v$. (For example we may take $\delta_i^v=\alpha_v^i\beta_v\alpha_v^{m-i}$ and $n_v=m|\alpha_v|+|\beta_v|$.) 

Next, let $\ell$ be the least common multiple of the $n_v$ with $v \in \mathcal{V}$, and set $\ell_v=\ell/n_v$. Then the closed paths
$\{\gamma_i^v=(\delta_i^v)^{\ell_v}: v\in\cV,\ \ 1\le i\le m\}$ satisfy the required conditions.
\end{proof}

\begin{lem}\label{lem:endo}
Let $E$ be a row-finite countable graph and $p\in [1,\infty)$. For $r \in \N$ let $\psi_r : \mathcal{O}^p(E) \to \mathcal{O}^p(E)$ be the linear map defined by $$ \psi_r(a) = \sum_{\gamma \in \path_r(E)} \gamma a \gamma^*.$$
Then: 
\begin{enumerate}
    \item $\psi_r$ is well-defined. 
    \item $\psi_r$ is contractive. 
    \item For every $a \in \mathcal{O}^p(E)$ and every $x \in \cO^p(E)_{0,r}$, $\psi_r(a) x = x \psi_r(a)$.
    \item $\psi_r(\cO^p(E)_n) \subseteq \cO^p(E)_n$.
    \item If $a,b \in \mathcal{O}^p(E)$ are such that $v a = a v $ or $v b = b v$ for every $v \in E^0$, then $\psi_r(ab) = \psi_r(a) \psi_r(b)$.
    
\end{enumerate}
\end{lem}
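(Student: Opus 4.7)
I would first fix a faithful spatial representation $\cO^p(E)\subseteq\cB(L^p(X))$ and work inside it, so that each $\gamma\in\cP_r(E)$ acts as a spatial partial isometry with initial projection $r(\gamma)$ and final projection $\gamma\gamma^*$. Two structural facts drive the argument: (a) the family $\{\gamma\gamma^*:\gamma\in\cP_r(E)\}$ consists of pairwise orthogonal spatial idempotents; and (b) $\gamma^*\gamma'=\delta_{\gamma,\gamma'}\,r(\gamma)$ for $\gamma,\gamma'\in\cP_r(E)$. For parts (1) and (2), given $a\in\cO^p(E)$ and $\xi\in L^p(X)$, the spatial identity $\|\gamma^*\xi\|_p=\|\gamma\gamma^*\xi\|_p$ gives $\|\gamma a\gamma^*\xi\|_p\le\|a\|\,\|\gamma\gamma^*\xi\|_p$, and orthogonality of the output supports yields, for any finite $F\subseteq\cP_r(E)$,
\[
\Big\|\sum_{\gamma\in F}\gamma a\gamma^*\xi\Big\|_p^p=\sum_{\gamma\in F}\|\gamma a\gamma^*\xi\|_p^p\le\|a\|^p\|\xi\|_p^p.
\]
Hence the partial sums converge in the strong operator topology to a bounded operator of norm at most $\|a\|$. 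To see the limit belongs to $\cO^p(E)$, I would check it first on $L(E)$: the formula becomes $\psi_r(\alpha\beta^*)=\sum_{\gamma\in\cP_{r,s(\alpha)}}(\gamma\alpha)(\gamma\beta)^*$, and an argument using the vertex approximate units $\{v_F:F\subseteq E^0\text{ finite}\}$ confirms this series sums to an element of $\cO^p(E)$; the contractive bound then extends $\psi_r$ by continuity to all of $\cO^p(E)$.

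For (3), by part (2) it suffices to check $x\psi_r(a)=\psi_r(a)x$ on $x=\alpha\beta^*$ with $\alpha,\beta\in\cP_m$, $m\le r$, and $r(\alpha)=r(\beta)$. Using the inverse-semigroup identity $\beta^*\gamma=\delta$ precisely when $\gamma=\beta\delta$ for some $\delta\in\cP_{r-m}$ (and zero otherwise), together with the dual identity $\gamma^*\alpha=(\delta')^*$ when $\gamma=\alpha\delta'$, both sides collapse to the same expression $\sum_{\delta\in\cP_{r-m}:\,s(\delta)=r(\alpha)}\alpha\delta\,a\,\delta^*\beta^*$. For (4), the gauge action gives $\gamma_z(\gamma a\gamma^*)=z^r\gamma\cdot\gamma_z(a)\cdot z^{-r}\gamma^*=\gamma\,\gamma_z(a)\,\gamma^*$, so if $a\in\cO^p(E)_n$ every summand scales by $z^n$, and hence so does $\psi_r(a)$. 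For (5), the orthogonality in (b) collapses the double sum:
\[
\psi_r(a)\psi_r(b)=\sum_{\gamma,\gamma'\in\cP_r(E)}\gamma a\gamma^*\gamma' b(\gamma')^*=\sum_\gamma\gamma\,a\,r(\gamma)\,b\,\gamma^*.
\]
If $vb=bv$ for every $v\in E^0$ then $r(\gamma)b=br(\gamma)$, so $\gamma\,a\,r(\gamma)\,b\,\gamma^*=\gamma ab\,r(\gamma)\gamma^*=\gamma ab\gamma^*$ because $r(\gamma)\gamma^*=\gamma^*$; the case $av=va$ is symmetric, using $\gamma\,r(\gamma)=\gamma$. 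Summing yields $\psi_r(ab)$.

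The hardest step will be (1): justifying that the a priori infinite sum $\sum_\gamma\gamma a\gamma^*$ actually lies in the Banach algebra $\cO^p(E)$ rather than merely in $\cB(L^p(X))$. Since the partial sums are not generally Cauchy in operator norm (for instance $\psi_r(v)=\sum_{\gamma\in\cP_{r,v}}\gamma\gamma^*$ is a sum of mutually orthogonal norm-$1$ idempotents), I expect to need to invoke the appropriate strict-type topology coming from the vertex approximate units, in which $\cO^p(E)$ is closed and in which the series converges. Once well-definedness is secured, parts (2)--(5) reduce to routine algebraic consequences of the facts (a) and (b).
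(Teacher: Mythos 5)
Your proposal follows the same route as the paper's proof: fix a faithful spatial representation, exploit that the final projections $\rho(\gamma\gamma^*)$, $\gamma\in\cP_r$, are projections onto disjointly supported subspaces $L^p(X_\gamma)$, and that $\gamma^*\gamma'=\delta_{\gamma,\gamma'}r(\gamma)$; parts (3)--(5) are then exactly the algebraic manipulations you give (the paper reduces (3) to generators of $L(E)_{0,r}$ by density, gets (4) from the fact that each summand $\gamma a\gamma^*$ has degree $n$, and gets (5) by collapsing the double sum, just as you do; your explicit computation for (3), checking on $\alpha\beta^*$ rather than only on diagonal generators $\gamma\gamma^*$, is if anything the more complete version). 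For (1)--(2) you are more explicit than the paper, which only records that $\bigl\|\sum_{\gamma\in F}\gamma a\gamma^*\bigr\|=\max_{\gamma\in F}\|\gamma a\gamma^*\|\le\|a\|$ for finite $F\subset\cP_r$ and then declares $\psi_r$ well defined and contractive; your $\ell^p$-direct-sum estimate and strong-operator convergence of the partial sums is the correct way to make sense of the infinite sum and of the bound $\|\psi_r(a)\|\le\|a\|$.

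The one genuine gap is precisely the one you flag at the end, and your proposed repair does not close it. The strong-operator (equivalently, strict) limit of the partial sums need not lie in $\cO^p(E)$, because $\cO^p(E)$ is \emph{not} closed in that topology. Concretely, if infinitely many paths of length $r$ end at a vertex $v$ (possible even for SPI row-finite countable graphs without sources), then by row-finiteness the set $\{s(\gamma):\gamma\in\cP_{r,v}\}$ is infinite; for any $x\in L(E)$ one has $wx=0$ for all but finitely many $w\in E^0$, whereas $w\,\psi_r(v)=\sum_{\gamma\in\cP_{r,v},\,s(\gamma)=w}\gamma\gamma^*$ is a norm-one idempotent for infinitely many $w$, so $\|\psi_r(v)-x\|\ge 1$ and $\psi_r(v)\notin\overline{L(E)}=\cO^p(E)$. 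Thus $\psi_r$ is well defined and contractive only as a map into $\cB(L^p(X))$ (or into a multiplier-type algebra), not into $\cO^p(E)$ as stated; your suggestion that the series converges ``in a strict-type topology in which $\cO^p(E)$ is closed'' cannot be made to work. To be fair, the paper's own proof has exactly the same gap. The statement is harmless where it is used (the proof of Lemma \ref{lem:caso0}), because there $\psi_m(\cdot)$ is always multiplied by elements of $\alpha\alpha^*\cO^p(E)_{0,m}$, and the relation $\nu^*\gamma=\delta_{\nu,\gamma}r(\nu)$ kills all but finitely many summands, so the resulting products do lie in $\cO^p(E)$. If you want your write-up to be airtight, either restate (1)--(2) with codomain $\cB(L^p(X))$, or prove instead the weaker assertion that $\psi_r(a)x=x\psi_r(a)\in\cO^p(E)$ for $x\in\cO^p(E)_{0,r}$, which is all that is needed downstream.
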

\begin{proof}
By \cite{we}*{pagraph before Proposition 7.5}, there is an injective, nondegenerate spatial representation $\rho:L(E)\to \cB(L^p(X))$ such that $\cO^p(E)=\overline{\rho(L(E))}$. By \cite{we}*{Remark 4.5}, there is a family of disjoint measurable subsets $(X_\gamma)_{\gamma\in\cP_r(E)}$ of $X$ such that $\rho(\gamma\gamma^*)$ is the canonical projection $L^p(X)\onto L^p(X_\gamma)\subset L^p(X)$. Hence for any finite subset $F\subset\cP_r$, we have $||\sum_{\gamma\in F}\gamma a\gamma^*||=\max_{\gamma\in F}||\gamma a\gamma^*||\le ||a||$. Thus $\psi_r$ is well-defined and contractive, proving (1) and (2). A standard density argument shows that the general case of part (3) follows from the case when $x\in L(E)_{0,r}$. Recall $L(E)_{0,r}\subset L(E)_0$ is the subspace generated by the elements of the form $\gamma\gamma^*$ where either $|\gamma|=r$ of $|\gamma|<r$ and $r(\gamma)\in\sink(E)$; hence it suffices to check that (3) holds for such generators, and this is straightforward. Part (4) is immediate from the fact that $|\gamma^*|=-|\gamma|$, and part (5) follows from the fact that if $\alpha,\beta\in\cP_r$ then 
$\alpha^*\beta=\delta_{\alpha,\beta}r(\alpha)$.
\end{proof}

\begin{lema} \label{lem:caso0}
Let $E$ be a purely infinite simple, countable, row-finite graph, and $p\in [1,\infty)$.  Let $0\ne a \in \cO^p(E)_0 $. Then there are $h \in \N_0$, $x\in \cO^p(E)_{-h}$, $y \in \cO^p(E)_{h}$ and $v\in E^0$ such that $x a y = v $.
\end{lema}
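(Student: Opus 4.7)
The plan is to approximate $a$ by a finite linear combination in $L(E)_{0,N}$, compress by suitable $\alpha_0^*$ and $\beta_0$ to a near-scalar multiple of the vertex $v:=r(\alpha_0)$, and invert via a Neumann series in the unital corner $v\cO^p(E)_0 v$.

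First note that, since $E$ is simple purely infinite and row-finite, $E$ has no sinks: a sink would violate cofinality against any cycle. Hence in \eqref{eq:le0r} only the second summand contributes, giving $L(E)_{0,N}\cong\bigoplus_{v\in E^0}M_{\cP_{N,v}}$, so every element of $L(E)_{0,N}$ is a finite sum $a_0=\sum c_{\alpha,\beta}\,\alpha\beta^*$ with $|\alpha|=|\beta|=N$ and $r(\alpha)=r(\beta)$. Since $\alpha^*\alpha'=\delta_{\alpha,\alpha'}\,r(\alpha)$ for paths of equal length, the key identity is
\[
\alpha^* a_0 \beta \;=\; c_{\alpha,\beta}\, v \qquad (\alpha,\beta\in\cP_{N,v}).
\]

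Using the density $\cO^p(E)_0 = \overline{\bigcup_N\cO^p(E)_{0,N}}$, I select a sufficiently fine approximation $a_0 \in L(E)_{0,N}$ and a pair $(\alpha_0, \beta_0)$ such that $|c_{\alpha_0,\beta_0}|>\|a-a_0\|$; this is the delicate point, addressed below. Setting $v:=r(\alpha_0)\in E^0$,
\[
\alpha_0^* a \beta_0 \;=\; c_{\alpha_0,\beta_0}\,v + r, \qquad r \;:=\; \alpha_0^*(a-a_0)\beta_0 \;\in\; v\cO^p(E)_0 v,
\]
and $\|r\|\le\|a-a_0\|<|c_{\alpha_0,\beta_0}|$ because spatial partial isometries are contractive. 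Thus $u:=v + r/c_{\alpha_0,\beta_0}\in v\cO^p(E)_0 v$ has $\|u-v\|<1$, so $u$ is invertible in the closed unital Banach subalgebra $v\cO^p(E)_0 v \subseteq v\cO^p(E) v$ by the Neumann series; if $w\in v\cO^p(E)_0 v$ denotes its inverse, then $(w/c_{\alpha_0,\beta_0})\,\alpha_0^* a \beta_0 = wu = v$. Taking $h:=N$, $x:=(w/c_{\alpha_0,\beta_0})\alpha_0^*\in \cO^p(E)_{-N}$ and $y:=\beta_0\in \cO^p(E)_{N}$ yields $xay=v$, as required.

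The main obstacle is securing the domination $|c_{\alpha_0,\beta_0}|>\|a-a_0\|$ for some nonzero coefficient of $a_0$. In a spatial block $M_{\cP_{N,v}}$, a finite-rank operator $T$ with support of size $D$ satisfies $\max|t_{\alpha,\beta}|\ge\|T\|_{\mathrm{op}}/D$ (from $\|T\|_{\mathrm{op}}\le D\max|t_{\alpha,\beta}|$, obtained by Hölder on standard basis vectors), so the target inequality reduces to $\|a-a_0\|<\|a_0\|/D$. Since $D$ itself depends on the chosen $a_0$, verifying this requires balancing support size against approximation quality, which can be done by a sequential refinement of $a_0$ inside the filtration $\bigcup_N L(E)_{0,N}$.
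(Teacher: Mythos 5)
Your overall strategy --- compress $a$ by a single pair of paths $\alpha_0^*(\cdot)\beta_0$ to land near a nonzero multiple of a vertex, then invert by a Neumann series --- founders on exactly the point you flag, and the proposed ``sequential refinement'' does not repair it. To run the argument you need an approximant $a_0\in L(E)_{0,N}$ with $\|a-a_0\|<\max_{\alpha,\beta}|c_{\alpha,\beta}(a_0)|$, and your only lower bound on the right-hand side is $\|a_0\|/D$ with $D$ the support size of $a_0$. But $D$ and $\|a-a_0\|$ pull in opposite directions: as $N$ grows, the achievable error tends to $0$ at a rate over which you have no control, while the number of nonzero entries needed to achieve it can grow without bound. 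In a block $M_{\cP_{N,v}}$ the normalized all-ones matrix has norm $1$ on $\ell^p$ but largest entry $|\cP_{N,v}|^{-1}$, so ``norm bounded below, all entries small'' is entirely possible; nothing in the filtration $\bigcup_N L(E)_{0,N}$ forces $\|a-a_0\|\cdot D<\|a_0\|$ to hold for some $N$. For elements of $\cO^p(E)_0$ whose finite-stage approximants are necessarily spread out, a single compression $\alpha_0^*a\beta_0$ sees only one matrix entry and can be uniformly too small. This is a genuine gap, not a technicality.

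The paper's proof circumvents it with two ingredients your argument lacks. First, instead of approximating $a$ itself, it considers the closed ideal $\langle a\rangle$ of $\cO^p(E)_0$ generated by $a$ and invokes the Phillips--Viola structure theorem for ideals in spatial $L^p$-AF algebras to produce an exact projection $\alpha\alpha^*\in\langle a\rangle\cap\cO^p(E)_{0,\ell}$; this yields $b_k,c_k$ with $\|\alpha\alpha^*-\sum_{k=1}^d b_k a c_k\|<1/8$, i.e.\ a \emph{finite sum} of products close to an idempotent, with no need to control individual matrix entries. Second --- and this is the step a single-pair compression cannot replace --- it folds the $d$-term sum into a single product $x_0a_0y_0$ by means of $d$ distinct closed paths of equal length based at the relevant vertices (Lemma \ref{lem:caminos}) together with the averaging maps $\psi_m$ of Lemma \ref{lem:endo}, which shift the $b_k$ and $c_k$ into mutually orthogonal positions. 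This is where pure infiniteness of $E$ genuinely enters the degree-zero case; your proposal uses it only to rule out sinks, which is itself a sign that an essential idea is missing.
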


\begin{proof}
Let $\langle a \rangle$ be the closed two-sided ideal of $\cO^p(E)_0$ generated by $a$. Because $\cO^p(E)_{0,n}=L(E)_{0,n}$ is matricial for all $n$, and because $a \neq 0$ by hypothesis, by (\cite{chrismg2}*{Theorem 3.5}) , there exists $\ell\in \N $ such that $\langle a \rangle \cap \cO^p(E)_{0,\ell}\ne 0 $. By definition of $\cO^p(E)_{0,\ell}$ there exists $\alpha \in \path_\ell(E)$ such that $\alpha \alpha^* \in \langle a \rangle$. Since $E$ contains at least one cycle, using the Cuntz-Krieger relation CK2 \cite{lpabook}*{Definition 1.2.3} and the hypothesis that $E$ is row-finite,   upon increasing $\ell$ if necessary, we may assume that $r(\alpha)$  is the base of at least one cycle.  Let $d\ge 1$ and $b,c\in \cO^p(E)^d_0$  such that
$$ \left\| \alpha \alpha^* - \sum_{k = 1}^{d} b_k a c_k \right\| < \frac{1}{8}.$$ 
Without loss of generality we may assume that $\alpha \alpha^* b_k = b_k$ and $c_k \alpha \alpha^*  = c_k$. Set $M = \left(1+ ||b||_1 \right)\left(1+ ||c||_1\right)$. By density, there are $m\ge \ell$, $a_0\in \cO^p(E)_{0,m}$, $b^0\in \alpha\alpha^*\cO^p(E)^d_{0,m}$ and $c^0\in \cO^p(E)^d_{0,m}\alpha\alpha^*$ such that 
$$\| a_0 - a \| < \frac{1}{8M},\ \  ||b^0-b||_1<\frac{1}{8} (\frac{1}{(1+||a||)(1+||c||_1)}),\ \ ||c^0-c||_1<\frac{1}{8(1+||a||)\cdot (1+||b||_1)}.$$
In particular $||b-b^0||_1<1$ and $||c-c^0||_1<1$, and therefore 
\begin{equation}\label{ineq:M}
||b^0||_1 ||c^0||_1<M.  
\end{equation}
Moreover we have 
\begin{gather*}
||(\sum_{k=1}^d    b_k^0a_0c_k^0)-\alpha\alpha^*||\le ||\sum_{k=1}^d(b_k^0-b_k)a_0c_k^0||+||\sum_{k=1}^db_k(a_0-a)c_k^0||+
\\ ||\sum_{k=1}^db_ka(c_k^0-c_k)||+||(\sum_{k=1}^db_kac_k)-\alpha\alpha^*||<\frac{1}{2}
\end{gather*}
Hence  $ \sum_{k=1}^d b_k^0 a_0 c_k^0$ is an element of the open ball of radius $\frac{1}{2}$ centered at the unit of the unital Banach algebra $\alpha\alpha^*\cO^p(E)_{0,m}\alpha\alpha^*$. Thus there exists $z \in \alpha \alpha^*\cO^p(E)_{0,m}\alpha \alpha^*$, such that $ z\left( \sum_{k=1}^d b_k^0 a_0 c_k^0 \right)=\left( \sum_{k=1}^d b_k^0 a_0 c_k^0 \right)z = \alpha \alpha^*$. Moreover a standard argument shows that
\begin{equation}\label{ineq:z<2}
||z||<2. 
\end{equation}

Because $E$ is row-finite, the set $\mathcal{V} := \{ r(\beta) \ : \ s(\beta) = r(\alpha) \text{ and } |\beta| = m \}$ is finite, and because $r(\alpha)$ is the base of a cycle, the same is true of every $w \in \mathcal{V}$, by cofinality. Hence by Lemma \ref{lem:caminos} for every $w\in\cV$ there are $d$ closed paths based at $w$, $ \gamma_1^w, \dots, \gamma_d^w$, all of the same length $n$. Set $t =  \psi_m (\sum_{w \in \mathcal{V}} (\gamma_1^w)^*)$, $u =  \psi_m (\sum_{w \in \mathcal{V}} (\gamma_1^w))$, $f_{j,k} =  \psi_m (\sum_{w \in \mathcal{V}} \gamma_j^w(\gamma_k^w)^*)$, $x_0=z t( \sum_{k=1}^d b_k^0 f_{1,k})$ and $y_0 = ( \sum_{k=1}^d f_{k,1}c_k^0) u$. Observe that $x_0 \in \cO^p(E)_{-n}$ and $y_0 \in \cO^p(E)_{n}$. Using (3) and (5) of Lemma \ref{lem:endo}, we obtain
\begin{equation}\label{eq:daalpha}
    x_0 a_0 y_0 =  z \sum_{k=1}^d b_k^0 a_0 c_k^0 \psi_m\left( \sum_{w \in \mathcal{V}} w\right) = z \sum_{k=1}^d b_k^0 a_0 c_k^0 = \alpha \alpha^*.
\end{equation} 
Note also that  $\| y_0 \| \leq  ||c^0||_1$ and by \eqref{ineq:z<2}, $\| x_0 \|< 2 ||b^0||_1$. Hence using \eqref{ineq:M} and \eqref{eq:daalpha},  we get
$$ \| x_0 a y_0 - \alpha \alpha^*\|= \| x_0 a y_0 - x_0 a_0 y_0 \| < 2 M \|a-a_0\| < \frac{1}{4}. $$
Moreover, by construction, $x_0\in\alpha\alpha^*\cO^p(E)_{-n}$ and $y_0\in \cO^p(E)_n\alpha\alpha^*$, so $x_0ay_0\in \alpha\alpha^* \cO^p(E)_{0}\alpha\alpha^*$. Hence there is a $ b \in \alpha\alpha^*\cO^p(E)_{0}\alpha\alpha^*$ such that $b x_0 a y_0 = x_0 a y_0 b=\alpha \alpha^*$. Then for $h=n+|\alpha|$, $x = \alpha^*x_0 \in \cO^p(E)_{-h}$ and $y = y_0 b \alpha \in \cO^p(E)_h$ satisfy $x a y = r(\alpha)$.
\end{proof}

\begin{lema}\label{lem:trunca}
    Let $E$ be a purely infinite simple graph, $b\in L(E)$ and $v\in E^0$. Assume that $\Phi_0(b)=0$ and that $v$ is the basis of a cycle. Then there is a path $\sigma$ with $s(\sigma)=r(\sigma)=v$ and $\sigma^*b\sigma=0$.
\end{lema}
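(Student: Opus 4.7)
The plan is to reduce $b$ to a manageable normal form, analyze when each term of $\sigma^*b\sigma$ vanishes, and then use Lemma~\ref{lem:caminos} to pick a single $\sigma$ killing every term simultaneously.

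\textbf{Step 1 (normalization).} Since $\sigma^*b\sigma=\sigma^*(vbv)\sigma$ for any $\sigma$ with $s(\sigma)=r(\sigma)=v$, we may replace $b$ by $vbv$. Thus we may write $b = \sum_{i=1}^r \lambda_i \alpha_i\beta_i^*$ as a finite sum with $\lambda_i\neq 0$, $s(\alpha_i)=s(\beta_i)=v$, and $r(\alpha_i)=r(\beta_i)$; the hypothesis $\Phi_0(b)=0$ lets us further arrange $|\alpha_i|\neq|\beta_i|$ for every $i$. Let $D=\max_i\max(|\alpha_i|,|\beta_i|)$.

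\textbf{Step 2 (vanishing condition term by term).} Fix a closed path $\sigma$ at $v$ of length $L>D$. The Leavitt relation $f^*g=\delta_{f,g}r(f)$ (for edges $f,g$ with $s(f)=s(g)$) forces $\sigma^*\alpha_i\beta_i^*\sigma=0$ unless both $\alpha_i$ and $\beta_i$ are prefixes of $\sigma$; since $|\alpha_i|\neq|\beta_i|$, this implies the shorter is a prefix of the longer. Assuming $|\alpha_i|<|\beta_i|$ and writing $\beta_i=\alpha_i\eta_i$ with $\eta_i$ a closed path at $r(\alpha_i)$, and $\sigma=\beta_i\sigma_i''$, one computes
\[
\sigma^*\alpha_i\beta_i^*\sigma \;=\; (\eta_i\sigma_i'')^*\sigma_i''.
\]
A short induction on $|\sigma_i''|$ (unfolding the Leavitt relations and exploiting the periodicity pattern of $\eta_i$) shows this product is nonzero precisely when $\sigma_i''$ is a prefix of the infinite word $\eta_i^\infty$; equivalently, exactly when $\sigma$ equals the length-$L$ prefix of the infinite path $p_i:=\alpha_i\eta_i^\infty$ starting at $v$. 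The case $|\alpha_i|>|\beta_i|$ is symmetric and yields the same $p_i$.

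\textbf{Step 3 (choice of $\sigma$).} Apply Lemma~\ref{lem:caminos} with $\cV=\{v\}$ and $m=r+1$ to find $r+1$ distinct closed paths $\gamma_1,\dots,\gamma_{r+1}$ at $v$ of common length $\ell$. Replacing each $\gamma_j$ by $\gamma_j^k$ for $k\ge 1$ large enough that $k\ell>D$, we may further assume the paths have length greater than $D$ while remaining pairwise distinct. For each of the $r$ indices $i$, at most one of these $r+1$ candidate paths can coincide with the length-$k\ell$ prefix of $p_i$, so by pigeonhole some $\sigma$ among them satisfies $\sigma\neq p_i|_{k\ell}$ for every $i$. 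By Step~2 this gives $\sigma^*b\sigma=0$.

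\textbf{Main obstacle.} The delicate point is the induction in Step~2: verifying that $(\eta_i\sigma_i'')^*\sigma_i''$ collapses to $0$ as soon as $\sigma_i''$ eventually diverges from $\eta_i^\infty$. Once this combinatorial identity is in hand, the pigeonhole argument of Step~3 and the reductions of Step~1 are essentially bookkeeping.
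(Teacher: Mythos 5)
Your argument is correct, but it reaches the conclusion by a genuinely different route from the paper. Both proofs rest on the same combinatorial core: after reducing to $b=\sum_{i=1}^r\lambda_i\alpha_i\beta_i^*$ with $|\alpha_i|\neq|\beta_i|$, a closed path $\sigma$ at $v$ of length $L$ exceeding all the $|\alpha_i|,|\beta_i|$ fails to kill the $i$-th term only if $\sigma$ coincides with the length-$L$ prefix of the eventually periodic infinite path $p_i=\alpha_i\eta_i^\infty$ (your Step 2, including the prefix-of-$\eta_i\sigma_i''$ versus prefix-of-$\eta_i^\infty$ induction, is sound). Where you diverge is in how you then dodge the finitely many bad prefixes: the paper uses the two incomparable closed paths at $v$ (cofinality plus the exit condition) to build a single \emph{aperiodic} infinite path $\theta$ through $v$, and takes $\sigma$ to be any sufficiently long closed prefix of $\theta$; aperiodicity guarantees $\theta$ eventually departs from each eventually periodic $p_i$. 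You instead invoke Lemma~\ref{lem:caminos} with $m=r+1$ to produce $r+1$ distinct closed paths of a common length (raised to a power $k$ so that $k\ell>D$; note these powers do stay pairwise distinct since the original paths already differ within their first $\ell$ edges), and win by pigeonhole, since each index $i$ can disqualify at most one candidate of a given length. Your version is arguably more elementary and self-contained---it sidesteps the construction of the aperiodic path and the implicit comparison of $\theta$ with each $p_i$---at the cost of producing only an existence statement, whereas the paper's argument shows that \emph{every} sufficiently long closed prefix of the fixed path $\theta$ works, a slightly stronger and reusable conclusion. One small bookkeeping point worth adding: if $vbv=0$ the statement is immediate (any cycle at $v$ serves as $\sigma$), so you may assume $r\ge 1$ before applying the pigeonhole count.
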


\begin{proof}
    As mentioned in the proof of \ref{lem:caminos} above, by cofinality of $E$ there are two closed paths based at $v$ which are incomparable with respect to the path order \eqref{orderpath}. Using them and proceeding as in the proof of 
    \cite{we}*{Lemma 9.5} one obtains an aperiodic infinite path $\theta$ starting at and passing through $v$ infinitely many times. We shall show that any finite closed path $\sigma$ with $\sigma\ge \theta$ and $|\sigma|$ sufficiently large satisfies the requirement of the lemma. By hypothesis, $b$ is a finite linear combination of elements of the form $\alpha\beta^*$ with $r(\alpha)=r(\beta)$ and $|\alpha|\ne |\beta|$. Since $\gamma^*\beta\alpha^*\gamma=(\gamma^*\alpha\beta^*\gamma)^*$ for any path $\gamma$, it suffices to show that if $|\alpha|>|\beta|$ then any $\sigma$ as above of sufficient length satisfies $\sigma^*\alpha\beta^*\sigma=0$. If $\alpha^*\theta=0$, this is clear. Otherwise $\alpha\ge \theta$ and is of the form $\alpha=\beta\alpha_1$ for some path $\alpha_1$ of positive length. Because $\theta$ is aperiodic, we may write $\theta=\beta\alpha_1^n\theta'$ with $\alpha_1^*\theta'=0$, so $\theta'=\alpha_2\theta''$ with $n\ge 1$, $\alpha_1^*\alpha_2=0$ and $r(\alpha_2)=v$. Hence if $\theta\le \sigma\le\beta\alpha^n\alpha_2$ is a closed path, then it must be of the form $\sigma=\beta\alpha^n\alpha_2\alpha_3$ and 
    $$\sigma^*\alpha\beta^*\sigma=\alpha_3^*\alpha_2^*(\alpha_1^n)^*\beta^*\beta\alpha_1\beta^*\beta\alpha_1^n\alpha_2\alpha_3=\alpha_3^*\alpha_2^*\alpha_1\alpha_2\alpha_3=0.$$ 
\end{proof}

\begin{proof}[Proof of Theorem \ref{teo:OE-sip}, part 2: row-finite SPI graphs without sources]\label{prop:pisws}
Let $E$ be a
\goodbreak
\noindent  purely infinite simple row-finite graph without sources; we want to prove that $\mathcal{O}^p(E)$ is purely infinite simple as a Banach algebra.
Since $L(E)$ is purely infinite simple and dense in $\mathcal{O}^p(E)$, it is enough to show 
\begin{equation}\label{toprove:pisws}
(\forall    0\ne a \in \mathcal{O}^p(E))\ \ (\exists x,y\in\cO^p(E),\ \ v\in E^0)\ \ x a y = v.
\end{equation}

Let $\Phi_0:\cO^p(E)\to \cO^p(E)_0$ be as in \eqref{map:phin}. 
Put $a_0=\Phi_0(a)$.  The proof will proceed in several steps, as follows. 

\medskip

    \item Step 1: $a_0=a$.  Then \eqref{toprove:pisws} holds by 
Lemma \ref{lem:caso0}.

\smallskip

\item Step 2: $a_0= v$ for some $v \in E^0$. Take $c\in L(E)$ such that $\| a - v - c\| < \frac{1}{4}$. Then $\Phi_0(c) \in L(E)$ and $\|\Phi_0 (c) \| =  \| \Phi_0 (a-v - c)\| < \frac{1}{4}$. Set $b = c - \Phi_0(c)$. By construction, $\|a - v - b\| < \frac{1}{2}$. Because $E$ is cofinal, there is a path $\eta$ with $s(\eta)=v$ such that $r(\eta)$ is the basis of a cycle. Moreover by Lemma \ref{lem:record}, $\Phi_0(\eta^*a\eta)=\eta^*\Phi_0(a)\eta=r(\eta)$. 
Thus, upon substituting $a'=\theta^*a\theta$ for $a$, we may assume that $v$ is the basis of a cycle.
By Lemma \ref{lem:trunca}, there is a closed path $\sigma$ based at $v$ such that $(\sigma)^* b \sigma = 0$. In particular, $\sigma\in v\cO^p(E)v$; moreover $\| (\sigma)^* a \sigma - v \| = \| (\sigma)^* a \sigma - (\sigma)^* v \sigma  \| = \| (\sigma)^*( a - v - b) \sigma  \| < \frac{1}{2}$. Thus there is a $z \in v\mathcal{O}^p(E)v$ such that $z \sigma^* a \sigma = v$.
This concludes the case when $a_0=v$.

\smallskip

\item Step 3: $a_0\ne 0$. By Lemma \ref{lem:caso0}, there exist $h\ge 0$, $x_0 \in \cO^p(E)_{-h}$, $y_0 \in \cO^p(E)_h$ such that $x_0 a_0 y_0 = v$. By  Lemma \ref{lem:record}, we also have $\Phi_0 ( x_0 a y_0)=v$. By the above case, there are $x, y \in \mathcal{O}^p(E)$ such that $x x_0 a y_0 y = v $. 

\smallskip

\item Step 4: $a \neq 0$. If $a_0\ne 0$ we are in the situation of Step 3 above. Otherwise, by  Lemma \ref{lem:record} (ii) there exists $n \in \Z\setminus\{0\}$ such that $\Phi_n(a) \neq 0$, and then because the finite sums of vertices of $E$ form an approximate unit for $\cO^p(E)$, there must be a $w\in E^0$ such that $\Phi_n(a) w \neq 0$. Assume $n>0$; because $E$ has no sources, there exists $\alpha \in \path$ such that $|\alpha| = n$ and $r(\alpha) = w $. Because $\Phi_0 (a \alpha^*) \alpha = \Phi_n(a) \alpha^* \alpha = \Phi_n(a) w \neq 0$, we get that $\Phi_0(a\alpha^*) \neq 0 $ and by the above case, there exist $x, y \in \mathcal{O}^p(E)$ such that $xa\alpha^*y = v$. If $n < 0 $, the argument is similar.
\end{proof}

\section{Proof of Theorem \ref{thm:intro}}\label{sec:final}

\begin{proof}
By Theorem \ref{thm:record}, the (purely infinite) simplicity of $L(E)$ is equivalent to that of $E$. By Theorems \ref{teo:E-sip} and \ref{teo:OE-sip}, $\cO^p(E)$ is SPI if and only if $E$ is. By definition a simple graph $E$ is not SPI if and only if it is acyclic. Hence if $\cO^p(E)$ is simple and not purely infinite, then $E$ is simple and acyclic, by Theorem \ref{teo:E-sip} and the second assertion of part (i) of Theorem \ref{thm:record} . The converse follows from Proposition \ref{prop:simpacyc} and Theorem \ref{teo:E-sip}. This concludes the proof. 
\end{proof}

\begin{bibdiv}
\begin{biblist}

\bib{lpabook}{book}{
author={Abrams, Gene},
author={Ara, Pere},
author={Siles Molina, Mercedes},
title={Leavitt path algebras}, 
date={2017},
series={Lecture Notes in Math.},
volume={2008},
publisher={Springer},
doi={$10.1007/978-1-4471-7344-1$},
}

\bib{aap}{article}{
    AUTHOR = {Abrams, Gene and Aranda Pino, Gonzalo},
     TITLE = {Purely infinite simple {L}eavitt path algebras},
  JOURNAL = {Journal of Pure and Applied Algebra},
    VOLUME = {207},
      YEAR = {2006},
    NUMBER = {3},
     PAGES = {553--563},
      ISSN = {0022-4049},
}

\bib{agp}{article}{
  title={$K_0$ of purely infinite simple regular rings},
  author={P. Ara},
  author={K. Goodearl},
  author={E. Pardo},
  journal={K-theory},
  volume={26},
  number={1},
  pages={69--100},
  year={2002},
  publisher={Springer}
}
\bib{cohn}{article}  {    AUTHOR = {Cohn, P. M.},
     TITLE = {Some remarks on the invariant basis property},
   JOURNAL = {Topology},
    VOLUME = {5},
      YEAR = {1966},
     PAGES = {215--228},
      ISSN = {0040-9383},
       DOI = {10.1016/0040-9383(66)90006-1},
       URL = {https://doi.org/10.1016/0040-9383(66)90006-1},
}
\bib{we}{article}{
  title={$L^p$ operator algebras associated with oriented graphs},
  author={Corti\~nas, Guillermo},
  author={Rodriguez, Mar\'\i a Eugenia},
  journal={J. Operator Theory},
  volume={81:(1)},
  pages={101--130},
  doi={10.7900/jot.2018jan19.2184},
}
\bib{dt}{article}{
 AUTHOR = {Drinen, D.},
 author={Tomforde, M.},
     TITLE = {The {$C^*$}-algebras of arbitrary graphs},
   JOURNAL = {Rocky Mountain J. Math.},
    VOLUME = {35},
      YEAR = {2005},
    NUMBER = {1},
     PAGES = {105--135},
      ISSN = {0035-7596},
       DOI = {10.1216/rmjm/1181069770},
       URL = {https://doi.org/10.1216/rmjm/1181069770},
}

\bib{lpcuntz}{article}{
AUTHOR = {Phillips, N. Christopher},
title={$L^p$-analogs of Cuntz algebras},
eprint={arXiv:1201.4196},
}
\bib{chrisimple}{article}{
author={Phillips, N. Christopher},
title={Simplicity of UHF and Cuntz algebras on $L^p$ spaces},
eprint={arXiv:1309.0115},
}
\bib{chrismg}{article}{
AUTHOR = {Phillips, N. Christopher},
author={Viola, Maria Grazia},
     TITLE = {Classification of spatial {$L^p$} {AF} algebras},
   JOURNAL = {Internat. J. Math.},
    VOLUME = {31},
      YEAR = {2020},
    NUMBER = {13},
     PAGES = {2050088, 41},
      ISSN = {0129-167X},
       DOI = {10.1142/S0129167X20500883},
       URL = {https://doi.org/10.1142/S0129167X20500883},
       }
\bib{chrismg2}{article}{
AUTHOR = {Phillips, N. Christopher},
author={Viola, Maria Grazia},
     TITLE = {Structure of ideals and incompressibility},
   status = {preprint},
       }

\bib{etesis}{book}{
author={Rodr\'iguez, Mar\'ia Eugenia},
title={Algebras de operadores en espacios
$L^p$ asociadas a grafos orientados
}, 
date={2016-12-21},
series={PhD
thesis},
volume={},
publisher={Facultad de Ciencias Exactas y Naturales. Universidad de Buenos Aires.},
doi={http://hdl.handle.net/20.500.12110/tesis_n6108_Rodriguez}
}
\bib{stex}{misc}{    
    title={Existence of a cofinal sequence in a countable directed set},    
    author={Noah Schweber (https://math.stackexchange.com/users/28111/noah-schweber)},    
    note={URL: https://math.stackexchange.com/q/1915655 (version: 2016-09-05)},    
    eprint={https://math.stackexchange.com/q/1915655},    
    organization={Mathematics Stack Exchange}  
}
       
\end{biblist}
\end{bibdiv} 

\end{document}